\title{Self-destructive percolation as a limit of forest-fire models on regular rooted trees}
\author{Robert Graf \\ \small \emph{Mathematisches Institut, Ludwig-Maximilians-Universit\"at M\"unchen} \\ \small \emph{Theresienstr. 39, 80333 M\"unchen, Germany} \\ \small \texttt{robert.graf@math.lmu.de}}
\date{}
\theoremstyle{plain}
\newtheorem{thm}{Theorem}
\newtheorem{lem}{Lemma}
\newtheorem{cor}{Corollary}
\newtheorem{prop}{Proposition}
\theoremstyle{definition}
\newtheorem{defn}{Definition}
\newcommand{\mb}{\mathbb}
\newcommand{\mc}{\mathcal}
\newcommand{\mf}{\mathbf}
\newcommand{\keywords}{\textbf{Key words. }\medskip}
\newcommand{\subjclass}{\textbf{MSC 2010. }\medskip}
\begin{document}

\maketitle

\begin{abstract}
Let $T$ be a regular rooted tree. For every natural number $n$, let $B_n$ be the finite subtree of vertices with graph distance at most $n$ from the root. Consider the following forest-fire model on $B_n$: Each vertex can be ``vacant'' or ``occupied''. At time $0$ all vertices are vacant. Then the process is governed by two opposing mechanisms: Vertices become occupied at rate $1$, independently for all vertices. Independently thereof and independently for all vertices, ``lightning'' hits vertices at rate $\lambda(n) > 0$. When a vertex is hit by lightning, its occupied cluster instantaneously becomes vacant.

Now suppose that $\lambda(n)$ decays exponentially in $n$ but much more slowly than $1/|B_n|$. We show that then there exist a supercritical time $\tau$ and $\epsilon > 0$ such that the forest-fire model on $B_n$ between time $0$ and time $\tau + \epsilon$ tends to the following process on $T$ as $n$ goes to infinity: At time $0$ all vertices are vacant. Between time $0$ and time $\tau$ vertices become occupied at rate $1$, independently for all vertices. At time $\tau$ all infinite occupied clusters become vacant. Between time $\tau$ and time $\tau + \epsilon$ vertices again become occupied at rate $1$, independently for all vertices. At time $\tau + \epsilon$ all occupied clusters are finite. This process is a dynamic version of self-destructive percolation.
\end{abstract}

\keywords{forest-fire model, self-destructive percolation, regular rooted tree}

\subjclass{Primary 60K35, 82C22; Secondary 82B43}

\section{Introduction and statement of the main results} \label{sec introduction}

\subsection{Introduction to forest-fire models}

Forest-fire models were first introduced in the physics literature by B.\ Drossel and F.\ Schwabl (see \cite{drossel1992forest}) as an example of self-organized criticality and have recently been studied by various mathematicians. Put simply, the notion of self-organized criticality is used for dynamical systems with local interactions which are inherently driven towards a perpetual critical state. At the critical state, the local interactions build up to trigger global ``catastrophic'' events, which are characterized by power laws, self-similarity and fractal behaviour. For a detailed account of self-organized criticality, the reader is referred to \cite{bak1999nature} and \cite{jensen1998self}. In this paper we consider a version of the forest-fire model which is defined on regular rooted trees or, more precisely, large finite subtrees thereof.

Let us start by introducing some notation about regular rooted trees. For the remainder of this paper, let $r \in \{2, 3, \ldots\}$ be fixed. The \textbf{$r$-regular rooted tree} is the unique tree (up to graph isomorphisms) in which one vertex, called the \textbf{root} of the tree, has degree $r$ and every other vertex has degree $r + 1$. We denote the $r$-regular tree by $T$ and the root of $T$ by $\emptyset$. In slight abuse of notation, we will use the term $T$ both for the $r$-regular tree as a graph and for its vertex set. Let $|u|$ denote the graph distance of a vertex $u \in T$ from the root $\emptyset$. For two vertices $u, v \in T$, we say that $u$ is the \textbf{parent} of $v$ (or equivalently that $v$ is a \textbf{child} of $u$) if $u$ and $v$ are neighbours and $|u| = |v| - 1$ holds. Moreover, for $u, v \in T$, we say that $u$ is an \textbf{ancestor} of $v$ (abbreviated by $u \preceq v$) if there exist $k \in \mb{N}_0$ and a sequence of vertices $z_0, z_1, \ldots, z_k \in T$ such that $z_0 = u$, $z_k = v$ and $z_{i - 1}$ is the parent of $z_i$ for all $i \in \{1, 2, \ldots, k\}$. For $n \in \mb{N}_0$, we say that $u \in T$ is in the \textbf{$n$th generation} of $T$ if $|u| = n$, and we define
\begin{align*}
T_n := \left\{ z \in T: |z| = n \right\}
\end{align*}
to be the set of all vertices in the $n$th generation and
\begin{align*}
B_n := \left\{ z \in T: |z| \leq n \right\} = \bigcup_{i = 0}^n T_i
\end{align*}
to be the set of all vertices with graph distance at most $n$ from the root $\emptyset$.

In order to explain some further terminology, let $V$ be a subset of $T$ and let $\alpha = (\alpha_v)_{v \in V} \in \{0, 1\}^{V}$. We say that a vertex $v \in V$ is occupied in $\alpha$ if $\alpha_v = 1$, and we say that $v$ is vacant in $\alpha$ if $\alpha_v = 0$. The set
\begin{align*}
T|_{\alpha, 1} := \left\{ v \in V: \alpha_v = 1 \right\} \subset T
\end{align*}
of occupied vertices in $\alpha$ induces a subgraph of $T$, which (in slight abuse of notation) we denote by $T|_{\alpha, 1}$, too. For any vertex $z \in V$ the maximal connected component of $T|_{\alpha, 1}$ containing $z$ is called the \textbf{(occupied) cluster} of $z$ in $\alpha$. Moreover, if $W$ is a connected subset of $T$, we say that a vertex $z \in T$ is the \textbf{root} of $W$ if $z \in W$ holds and $z$ is in the lowest generation among all vertices contained in $W$.

Let $n \in \mb{N}$. We now define the forest-fire model on $B_n$. Informally, the model can be described as follows: Each vertex in $B_n$ can be vacant or occupied. At time $0$ all vertices are vacant. Then the process is governed by two opposing mechanisms: Vertices become occupied according to independent rate $1$ Poisson processes, the so-called growth processes. Independently, vertices are hit by ``lightning'' according to independent rate $\lambda(n)$ Poisson processes (where $\lambda(n) > 0$), the so-called ignition processes. When a vertex is hit by lightning, its occupied cluster is instantaneously destroyed, i.e.\ it becomes vacant. Occupied vertices are usually pictured to be vegetated by a tree, so occupied clusters correspond to pieces of woodland and the destruction of clusters corresponds to the burning of forests by fires, which are caused by strokes of lightning. However, we avoid this terminology here because we already use the term tree in the graph-theoretic sense. A more formal definition of the forest-fire model goes as follows (where for a function $[0, \infty) \ni t \mapsto f_t \in \mb{R}$, we write $f_{t^-} := \lim_{s \uparrow t} f_s$ for the left-sided limit at $t > 0$, provided the limit exists):

\begin{defn} \label{defn forest-fire model}
Let $n \in \mb{N}$ and $\lambda(n) > 0$. Let $(\eta^n_{t, z}, G_{t, z}, I^n_{t, z})_{t \geq 0, z \in B_n}$ be a process with values in $(\{0, 1\} \times \mb{N}_0 \times \mb{N}_0)^{[0, \infty) \times B_n}$ and initial condition $\eta^n_{0, z} = 0$ for $z \in B_n$. Suppose that for all $z \in B_n$ the process $(\eta^n_{t, z}, G_{t, z}, I^n_{t, z})_{t \geq 0}$ is càdlàg, i.e.\ right-continuous with left limits. For $z \in B_n$ and $t > 0$, let $C^n_{t^-, z}$ denote the cluster of $z$ in the configuration $(\eta^n_{t^-, w})_{w \in B_n}$.

Then $(\eta^n_{t, z}, G_{t, z}, I^n_{t, z})_{t \geq 0, z \in B_n}$ is called a \textbf{forest-fire process} on $B_n$ with parameter $\lambda(n)$ if the following conditions are satisfied:
\begin{description}[leftmargin=3.3cm,style=sameline,font=\normalfont]
\item[\text{[POISSON]}] The processes $(G_{t, z})_{t \geq 0}$ and $(I^n_{t, z})_{t \geq 0}$, $z \in B_n$, are independent Poisson processes with rates $1$ and $\lambda(n)$, respectively.
\item[\text{[GROWTH]}] For all $t > 0$ and all $z \in B_n$ the following implications hold:
\begin{compactenum}[(i)]
\item $G_{t^-, z} < G_{t, z} \Rightarrow \eta^n_{t, z} = 1$, \\
i.e.\ the growth of a tree at the site $z$ at time $t$ implies that the site $z$ is occupied at time $t$;
\item $\eta^n_{t^-, z} < \eta^n_{t, z} \Rightarrow G_{t^-, z} < G_{t, z}$, \\
i.e.\ if the site $z$ gets occupied at time $t$, there must have been the growth of a tree at the site $z$ at time $t$.
\end{compactenum}
\item[\text{[DESTRUCTION]}] For all $t > 0$ and all $z \in B_n$ the following implications hold:
\begin{compactenum}[(i)]
\item $I^n_{t^-, z} < I^n_{t, z} \Rightarrow \forall w \in C^n_{t^-, z}: \eta^n_{t, w} = 0$, \\
i.e.\ if the cluster at $z$ is hit by lightning at time $t$, it is destroyed at time $t$;
\item $\eta^n_{t^-, z} > \eta^n_{t, z} \Rightarrow \exists v \in C^n_{t^-, z}: I^n_{t^-, v} < I^n_{t, v}$, \\
i.e.\ if the site $z$ is destroyed at time $t$, its cluster must have been hit by lightning at time $t$.
\end{compactenum}
\end{description}
\end{defn}

Given independent Poisson processes $(G_{t, z})_{t \geq 0}$ and $(I^n_{t, z})_{t \geq 0}$, $z \in B_n$, with rates $1$ and $\lambda(n)$, respectively, a unique corresponding forest-fire process $(\eta^n_{t, z}, G_{t, z}, I^n_{t, z})_{t \geq 0, z \in B_n}$ on $B_n$ can be obtained by a graphical construction (see \cite{liggett2004interacting}). For this construction it is crucial that $B_n$ is finite. Using different methods, M.\ Dürre obtained results on existence and uniqueness of forest-fire models for all connected infinite graphs with bounded vertex degree (see \cite{duerre2006existence}, \cite{duerre2006uniqueness}, \cite{duerre2009thesis}).

One of the most interesting aspects about the forest-fire process on $B_n$ is the question of what happens when $n$ tends to infinity. Assuming that the limit $n \rightarrow \infty$ exists in a suitable sense, we obtain a process on the infinite tree $T$, and the question thus concerns the dynamics of this limit process. It is intuitively clear that the growth mechanism carries over to the limit process but it is in general highly non-trivial what becomes of the destruction mechanism. Of course, the answer will depend strongly on the asymptotic behaviour of $\lambda(n)$. If $a$, $b$ are functions from $\mb{N}$ to $(0, \infty)$, we write
\begin{compactenum}[(i)]
\item $a(n) \ll b(n)$ for $n \rightarrow \infty$ if $a(n)/b(n) \rightarrow 0$ for $n \rightarrow \infty$;
\item $a(n) \approx b(n)$ for $n \rightarrow \infty$ if $\log a(n)/\log b(n) \rightarrow 1$ for $n \rightarrow \infty$.
\end{compactenum}
Heuristically, one expects four regimes of $\lambda(n)$ with qualitatively different asymptotics, which we now describe informally.

\begin{enumerate}
\item If $\lambda(n) \ll r^n$, then the number of lightnings in $B_n$ tends to $0$ for $n \rightarrow \infty$. Therefore, in the limit $n \rightarrow \infty$ no clusters can ever be destroyed so that the resulting process on $T$ is simply a dynamical formulation of Bernoulli percolation.
\item If $\lambda(n) \approx 1/m^n$ for some $1 < m < r$, then in the limit $n \rightarrow \infty$ no finite clusters and no ``thin'' infinite clusters (i.e.\ those in which on average every vertex has fewer then $m$ occupied child vertices) can be destroyed but ``fat'' infinite clusters (i.e.\ those in which on average every vertex has more then $m$ occupied child vertices) should still be hit by lightning as soon as they appear. The resulting process on $T$ should therefore have the following dynamics: Vertices become occupied at rate $1$, independently for all vertices. If an infinite cluster becomes ``fat'', it is instantaneously destroyed.
\item If $1/m^n \ll \lambda(n) \ll 1$ for every $m > 1$, then in the limit $n \rightarrow \infty$ no finite clusters can be destroyed but one would expect any infinite cluster to be dense enough that it is hit by lightning as soon as it appears. The resulting process on $T$ should therefore have the following dynamics: Vertices become occupied at rate $1$, independently for all vertices. If a cluster becomes infinite, it is instantaneously destroyed.
\item If $\lambda(n) = \lambda$ for some constant $\lambda > 0$, then the limit $n \rightarrow \infty$ should yield a forest-fire model on $T$ with the following dynamics: Vertices become occupied at rate $1$, independently for all vertices. Independently thereof and independently for all vertices, vertices are hit by lightning at rate $\lambda$. If a vertex is hit by lightning, its cluster is instantaneously destroyed.
\end{enumerate}

In this paper, we give a partial result for regime 2 in the sense that we prove the conjectured asymptotics between time $0$ and a deterministic time shortly \emph{after} the first destruction of infinite clusters in the limit process on $T$. Before we proceed to the precise statement, we briefly comment on the other regimes and give a short overview of related results.

Regime 1 is the simplest case and the above statement on this regime can easily be made rigorous. The statement on regime 4 follows from work by M.\ Dürre in \cite{duerre2009thesis}. In fact, the results of \cite{duerre2009thesis} are much more general in the sense that they are not restricted to regular rooted trees but hold for all connected infinite graphs with bounded vertex degree. Regime 3 is undoubtedly the most difficult case with few rigorous results yet. It is even unknown whether the hypothetical limit process described in 3 exists at all. For the square lattice $\mb{Z}^2$, the corresponding process does not exist (conjectured by J.\ van den Berg and R.\ Brouwer in \cite{berg2004destructive} and recently proven by D.\ Kiss, I.\ Manolescu and V.\ Sidoravicius in \cite{kiss2013planar}). Regime 3 is expected to behave similarly to the case where we first set $\lambda(n) = \lambda$ for some $\lambda > 0$ and then take the double limit $\lim_{\lambda \downarrow 0} \lim_{n \rightarrow \infty}$ (assuming that it exists in a suitable sense). In \cite{berg2006critical} this case was investigated for forest-fire models on the directed binary tree and on the square lattice. For forest-fire models on the square lattice $\mb{Z}^2$, an analogous heuristic description of four different regimes of the lightning rate can be found in the paper \cite{rath2006erdos} by B.\ Ráth and B.\ Tóth. The main content of \cite{rath2006erdos}, however, is the analysis of a forest-fire model which arises as a modification of the Erd\H os-Rényi evolution and which also shows four regimes of the lightning rate with different asymptotic behaviour.

\subsection{The pure growth process}

In the following, if $A$ is an event, we write $1_A$ for its indicator function, and if $B$ is any set, we write $|B|$ for the number of elements in $B$ (where $|B|$ can take values in $\mb{N}_0 \cup \{\infty\}$).

\begin{defn} \label{defn pure growth process}
Let $(G_{t, z})_{t \geq 0}$, $z \in T$, be independent rate $1$ Poisson processes and let
\begin{align*}
\sigma_{t, z} := 1_{\{G_{t, z} > 0\}} \text{,} \qquad t \geq 0, z \in T \text{.}
\end{align*}
Then $(\sigma_{t, z}, G_{t, z})_{t \geq 0, z \in T}$ is called a \textbf{pure growth process} on $T$. Moreover, for $x \in T$ and $t \geq 0$, we denote by $S_{t, x}$ the cluster of $x$ in the configuration $(\sigma_{t, z})_{z \in T}$, and for $t \geq 0$, we denote by
\begin{align*}
O_t := \left\{ z \in T: |S_{t, z}| = \infty \right\}
\end{align*}
the set of all vertices which are in an infinite cluster in the configuration $(\sigma_{t, z})_{z \in T}$.
\end{defn}

Above we claimed that as $n \rightarrow \infty$ in regime 2, the forest-fire process on $B_n$ should initially behave like the pure growth process on $T$ until ``fat'' infinite clusters appear for the first time. We now want to make this statement more precise.

We first observe that for $t \geq 0$, the configuration $(\sigma_{t, z})_{z \in T}$ is identical with Bernoulli percolation on $T$, where each vertex is occupied with probability $1 - e^{-t}$ and vacant with probability $e^{-t}$. From percolation theory it is well-known that there is a critical time $t_c := \log \frac{r}{r - 1}$ such that a.s.\ for $t \leq t_c$ there is no infinite cluster in $(\sigma_{t, z})_{z \in T}$ and for $t > t_c$ there are infinitely many infinite clusters in $(\sigma_{t, z})_{z \in T}$. For $z \in T$ and $t \geq 0$, conditionally on the event $\left\{ z \text{ is the root of } S_{t, z} \right\}$, the cluster $S_{t, z}$ can also be identified with a Galton-Watson process whose offspring distribution is binomially distributed with parameters $r$ and $1 - e^{-t}$. In particular, the offspring distribution at time $t \geq 0$ has mean
\begin{align} \label{eq mean binomial dist}
m(t) := r(1 - e^{-t})
\end{align}
and variance
\begin{align} \label{eq variance binomial dist}
\sigma^2(t) := r(1 - e^{-t})e^{-t} \text{.}
\end{align}
It is a consequence of the Kesten-Stigum theorem for Galton-Watson processes (see \cite{kesten1966stigum}) that for $z \in T$ and $t \geq 0$, there exists a random variable $W_{t, z}$ with values in $[0, \infty)$ such that
\begin{align} \label{eq Kesten-Stigum 1}
\lim_{n \rightarrow \infty} \frac{|S_{t, z} \cap B_n|}{m(t)^n} = W_{t, z} \text{ a.s.}
\end{align}
and
\begin{align} \label{eq Kesten-Stigum 2}
W_{t, z} > 0 \text{ a.s.\ on the event } \left\{ |S_{t, z}| = \infty \right\}
\end{align}
hold. (We will prove a different version later, see Proposition \ref{prop replacement Kesten-Stigum}.) This suggests that if the lightning rate in the forest-fire process on $B_n$ satisfies $\lambda(n) \approx 1/m^n$ for some $1 < m < r$, then the time threshold between ``thin'' and ``fat'' infinite clusters in the pure growth process should be the unique $\tau \in (t_c, \infty)$ with $m(\tau) = m$. In other words, in the limit $n \rightarrow \infty$, we expect to obtain a process on $T$ which is equal to the pure growth process between time $0$ and time $\tau$ and in which all infinite clusters are destroyed at time $\tau$.

\subsection{Statement of the main results}

We will make the heuristics of the previous paragraph rigorous in the following way:

\begin{defn}
Let $n \in \mb{N}$ and let $\lambda(n) > 0$. We say that a forest-fire process $(\eta^n_{t, z}, G_{t, z}, I^n_{t, z})_{t \geq 0, z \in B_n}$ on $B_n$ with parameter $\lambda(n)$ and a pure growth process $(\tilde \sigma_{t, z}, \tilde G_{t, z})_{t \geq 0, z \in T}$ on $T$ are \textbf{coupled in the canonical way} if they are realized on the same probability space and $(G_{t, z})_{t \geq 0, z \in B_n} = (\tilde G_{t, z})_{t \geq 0, z \in B_n}$ holds.
\end{defn}

\begin{thm} \label{thm general convergence}
Let $\tau \in (t_c, \infty)$ and suppose that $\lambda: \mb{N} \rightarrow (0, \infty)$ satisfies $\lambda(n) \approx 1/m(\tau)^n$ for $n \rightarrow \infty$. For $n \in \mb{N}$, let $(\eta^n_{t, z}, G_{t, z}, I^n_{t, z})_{t \geq 0, z \in B_n}$ be a forest-fire process on $B_n$ with parameter $\lambda(n)$ and let $(\sigma_{t, z}, G_{t, z})_{t \geq 0, z \in T}$ be a pure growth process on $T$, coupled in the canonical way under some probability measure $\mf{P}$. For $t \geq 0$, let $O_t$ be defined as in Definition \ref{defn pure growth process}. Then for all finite subsets $E \subset T$ and for all $\delta > 0$,
\begin{align*}
\lim_{n \rightarrow \infty} \mf{P} \left[ \sup_{z \in E, 0 \leq t \leq \tau - \delta} \left| \eta^n_{t, z} - \sigma_{t, z} \right| = 0, \forall z \in O_{\tau} \cap E \, \exists t \in (\tau - \delta, \tau + \delta): \eta^n_{t^-, z} > \eta^n_{t, z} \right] = 1
\end{align*}
holds.
\end{thm}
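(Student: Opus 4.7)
The statement combines two assertions: (a) $\eta^n_{t,z}=\sigma_{t,z}$ for every $z\in E$ and $t\in[0,\tau-\delta]$, and (b) every $z\in O_\tau\cap E$ is destroyed at some time in $(\tau-\delta,\tau+\delta)$. I would prove each separately and then intersect.

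For (a), introduce the event $A_n$ that no lightning strikes any vertex of $\bigcup_{z\in E}(S_{\tau-\delta,z}\cap B_n)$ during $[0,\tau-\delta]$. On $A_n$, the forest-fire cluster $C^n_{s,z}\subseteq S_{s,z}\cap B_n\subseteq S_{\tau-\delta,z}\cap B_n$ is never struck for $s\le\tau-\delta$, so no $z\in E$ is destroyed on that interval and $\eta^n$ agrees with $\sigma$ on $E$ throughout. A union bound over $E$ together with the Galton-Watson mean estimate $\mf{E}|S_{\tau-\delta,z}\cap B_n|\le C\sum_{k=0}^n m(\tau-\delta)^k$ gives $\mf{P}[A_n^c]\le C'|E|\,\lambda(n)\,(m(\tau-\delta)^n\vee n)$, which vanishes as $n\to\infty$ because $m(\tau-\delta)<m(\tau)$ strictly and $\log\lambda(n)=-n\log m(\tau)+o(n)$.

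For (b), the main tool is the stopping time
\[T_z := \inf\{t>0 : \text{a lightning strike at some } w\in S_{t,z}\cap B_n \text{ occurs at time } t\}.\]
The pivotal lemma is that for $t<T_z$ the forest-fire cluster of $z$ coincides with $S_{t,z}\cap B_n$, so that the strike at time $T_z$ destroys $z$: indeed, if some $v\in S_{s_1,z}$ were destroyed at $s_1<T_z$, then by \textbf{[DESTRUCTION]} there would be a struck vertex $u\in C^n_{s_1^-,v}$, and since forest-fire occupancy is contained in pure-growth occupancy this yields $u\in S_{s_1,v}=S_{s_1,z}$, contradicting the definition of $T_z$. On $A_n$ one has $T_z>\tau-\delta$, so it remains to prove that $\mf{P}[T_z\ge\tau+\delta,\,z\in O_\tau]\to 0$ for each $z\in E$. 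Conditioning on the growth process and writing $t^{(z)}_w:=\inf\{s\ge 0: w\in S_{s,z}\}$, the independence of the lightning Poisson processes gives
\[\mf{P}[T_z\ge\tau+\delta\mid\text{growth}]=\exp\!\Bigl(-\lambda(n)\sum_{w\in S_{\tau+\delta,z}\cap B_n}(\tau+\delta-t^{(z)}_w)\Bigr)\le\exp\!\Bigl(-\tfrac{\delta}{2}\lambda(n)|S_{\tau+\delta/2,z}\cap B_n|\Bigr),\]
using $t^{(z)}_w\le\tau+\delta/2$ for $w\in S_{\tau+\delta/2,z}$. By the Kesten-Stigum asymptotics (\ref{eq Kesten-Stigum 1})--(\ref{eq Kesten-Stigum 2}), $|S_{\tau+\delta/2,z}\cap B_n|\sim W_{\tau+\delta/2,z}\,m(\tau+\delta/2)^n$ almost surely with $W_{\tau+\delta/2,z}>0$ on $\{z\in O_\tau\}$; since $m(\tau+\delta/2)>m(\tau)$ and $\lambda(n)=m(\tau)^{-n(1+o(1))}$, the right-hand exponent goes to $-\infty$ exponentially, and dominated convergence followed by a union bound over $E$ completes the argument. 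The main delicate step is the "no prior destruction" lemma identifying $T_z$ with the actual destruction time of $z$; once it is in place, the asymptotics rest entirely on Kesten-Stigum and on the separation $m(\tau-\delta)<m(\tau)<m(\tau+\delta/2)$.
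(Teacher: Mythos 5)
Your part (a) is fine, and in fact simpler than what the paper does there (the paper proves agreement up to a time $\tau_n\to\tau$, which is more than the first clause of the theorem needs); the quantitative estimate $\lambda(n)\,\mf{E}|S_{\tau-\delta,z}\cap B_n|\to 0$ is correct because $m(\tau-\delta)<m(\tau)$. The problem is part (b): your ``pivotal lemma'' is false, and it is exactly where the real difficulty of the theorem lives. Your argument only excludes the destruction, before $T_z$, of vertices $v$ that belong to $S_{s_1,z}$ \emph{at the moment $s_1$ of their destruction}. It does not exclude the following: lightning strikes a vertex $u$ at a time $s$ when $S_{s,u}$ is disjoint from $S_{s,z}$ (so $T_z$ is not triggered), the struck cluster is destroyed, and only later do intermediate vertices grow and attach the (still vacant in the forest-fire process) destroyed vertices to $S_{t,z}$. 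Such strikes are not rare: the total lightning rate in $B_n$ is $\lambda(n)|B_n|\approx\lambda(n)r^n\to\infty$, so many clusters not yet connected to $z$ are burnt before and around time $\tau$, and pieces of what will become $S_{t,z}$ are missing from the forest-fire configuration. Consequently, for $t<T_z$ one only has $C^n_{t,z}\subseteq S_{t,z}\cap B_n$, possibly strictly, and the strike at time $T_z$ may land at a vertex $w\in S_{T_z,z}$ that is separated from $z$ in the forest-fire configuration by such a vacant ``scar''; then $z$ is \emph{not} destroyed at $T_z$, and your conclusion $\mf{P}[T_z\in(\tau-\delta,\tau+\delta)]\to 1$ does not yield the event $\exists t\in(\tau-\delta,\tau+\delta):\eta^n_{t^-,z}>\eta^n_{t,z}$.

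What is actually needed is that the \emph{forest-fire} cluster of $z$ (not the pure-growth cluster) is large at a time near $\tau$; a strike on the forest-fire cluster of $z$ does destroy $z$, since up to the first such strike that cluster stays intact and connected. Controlling the discrepancy between $C^n$ and $S^n$ is precisely what the paper's machinery does: it works at a time $\tau_n$ with $\lambda(n)m(\tau_n)^n=f(n)$ growing slowly, shows that up to $\tau_n$ there are only $O(f(n))$ strikes on $S^n_{\tau_n,x}$, all within depth $O(\log n)$ of generation $n$, and that destruction reaches depth at most $O(f(n)\log n)$, so that $|C^n_{\tau_n,x}|$ is still of order $m(\tau_n)^n$ (using a uniform-in-$t$ replacement for Kesten--Stigum, since $\tau_n$ varies with $n$); only then does the exponential bound $\exp(-\delta\lambda(n)|C^n_{\tau_n,x}|)\to 0$ give destruction of $x$ shortly after $\tau_n$. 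Your proposal contains no substitute for this control, so the gap is essential rather than cosmetic; the Kesten--Stigum estimate at the fixed time $\tau+\delta/2$ shows that the pure-growth cluster is struck before $\tau+\delta$, but not that the strike reaches $z$.
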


The condition on $\lambda$ in Theorem \ref{thm general convergence} can be written in a different way: Given $\tau \in (t_c, \infty)$ and a function $\lambda: \mb{N} \rightarrow (0, \infty)$, define the function $g: \mb{N} \rightarrow (0, \infty)$ by
\begin{align} \label{equation g}
g(n) := \lambda(n) m(\tau)^n \text{,} \qquad n \in \mb{N} \text{.}
\end{align}
Then it is easy to see that the following are equivalent:
\begin{compactenum}[(i)]
\item $\lambda(n) \approx 1/m(\tau)^n$ for $n \rightarrow \infty$;
\item $\sqrt[n]{g(n)} \rightarrow 1$ for $n \rightarrow \infty$.
\end{compactenum}
Under additional assumptions on $g$ we can determine whether the destruction of the infinite clusters asymptotically occurs immediately \emph{before} or \emph{after} time $\tau$:

\begin{thm} \label{thm refined convergence}
Consider the situation of Theorem \ref{thm general convergence}. In particular, suppose that the function $g$ defined by (\ref{equation g}) satisfies $\sqrt[n]{g(n)} \rightarrow 1$ for $n \rightarrow \infty$.
\begin{compactenum}[(i)]
\item If $g$ satisfies $g(n) \ll n/\log n$ for $n \rightarrow \infty$, then for all finite subsets $E \subset T$ and for all $\delta > 0$,
\begin{align*}
\lim_{n \rightarrow \infty} \mf{P} \left[ \sup_{z \in E, 0 \leq t \leq \tau} \left| \eta^n_{t, z} - \sigma_{t, z} \right| = 0, \forall z \in O_{\tau} \cap E \, \exists t \in (\tau, \tau + \delta): \eta^n_{t^-, z} > \eta^n_{t, z} \right] = 1
\end{align*}
holds, i.e.\ the infinite clusters are asymptotically destroyed immediately \emph{after} time $\tau$.
\item If there exists $\alpha \in (0, 1)$ such that $g$ satisfies $g(n) \gg \exp(n^{\alpha})$ for $n \rightarrow \infty$, then for all finite subsets $E \subset T$ and for all $\delta > 0$,
\begin{align*}
\lim_{n \rightarrow \infty} \mf{P} \left[ \sup_{z \in E, 0 \leq t \leq \tau - \delta} \left| \eta^n_{t, z} - \sigma_{t, z} \right| = 0, \forall z \in O_{\tau} \cap E \, \exists t \in (\tau - \delta, \tau): \eta^n_{t^-, z} > \eta^n_{t, z} \right] = 1
\end{align*}
holds, i.e.\ the infinite clusters are asymptotically destroyed immediately \emph{before} time $\tau$.
\end{compactenum}
\end{thm}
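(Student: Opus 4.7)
The plan is to refine the analysis of Theorem~\ref{thm general convergence} by giving a sharper description of the first destruction time in the coupled forest-fire process for vertices $z \in O_\tau \cap E$. Under the canonical coupling, as long as no vertex of the cluster of $z$ has been hit by lightning, that cluster in the forest-fire model coincides with $S_{u,z}$. Consequently, conditionally on the growth process, the first destruction time of $z$'s cluster is the first jump of an inhomogeneous Poisson process with instantaneous intensity $\lambda(n)\,|S_{u,z}\cap B_n|$ at time $u$, and the quantity to control is the cumulative intensity
\[
I_n(t;z) \;:=\; \int_0^t \lambda(n)\,|S_{u,z}\cap B_n|\,du \;=\; g(n)\int_0^t \frac{|S_{u,z}\cap B_n|}{m(\tau)^n}\,du.
\]

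Applying the Kesten-Stigum-type estimate of Proposition~\ref{prop replacement Kesten-Stigum} gives $|S_{u,z}\cap B_n| \approx c_u\,W_{u,z}\,m(u)^n$ with $c_u := m(u)/(m(u)-1)$, uniformly in $u$ on compact subsets of $(t_c,\infty)$ up to a controllable error. Using $m(\tau+v)/m(\tau) = 1 + \kappa v + O(v^2)$ with $\kappa := m'(\tau)/m(\tau) > 0$ and substituting $u = \tau + v$, the integrand concentrates in a window of width $O(1/n)$ around $v = 0$; on the event $\{W_{\tau,z}>0\}$, which by \eqref{eq Kesten-Stigum 2} has full measure on $\{z \in O_\tau\}$, this yields
\[
I_n(\tau+h;z) \;\sim\; \frac{c_\tau\,W_{\tau,z}\,g(n)}{n\,\kappa}\,e^{n\kappa h} \qquad \text{for } h \text{ in a neighbourhood of } 0.
\]

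For Part~(i), taking $h=0$ gives $I_n(\tau;z) \to 0$ under $g(n)\ll n/\log n$; the extra $\log n$ factor in the hypothesis absorbs error terms from the Kesten-Stigum approximation (uniformly in $u$) together with a union bound over the finite set $E$. A standard estimate for inhomogeneous Poisson processes then rules out destruction in $[0,\tau]$ with probability tending to one, while the divergence $I_n(\tau+\delta;z) \sim c_\tau\,W_{\tau,z}\,g(n)\,e^{\delta n\kappa}/(n\kappa) \to \infty$ forces at least one destruction in $(\tau,\tau+\delta)$. For Part~(ii), Theorem~\ref{thm general convergence} already excludes destruction in $[0,\tau-\delta]$, and under $g(n)\gg\exp(n^\alpha)$ the cumulative intensity increment $I_n(\tau;z) - I_n(\tau-\delta;z) \sim c_\tau\,W_{\tau,z}\,g(n)/(n\kappa) \to \infty$ forces a jump of the Poisson process in $(\tau-\delta,\tau)$; a finer inspection of the implicit equation $I_n(T^*_n;z) \sim 1$ even shows $\tau - T^*_n = O(1/g(n))$.

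The principal obstacle is to establish the Kesten-Stigum approximation $|S_{u,z}\cap B_n| \approx c_u\,W_{u,z}\,m(u)^n$ uniformly in $u$ over the relevant time interval and in $z\in E$, with a quantitative error bound on the residual; this is precisely what Proposition~\ref{prop replacement Kesten-Stigum} is set up to provide, and the best achievable error is what accounts for the logarithmic factor in the hypothesis of Part~(i). A secondary point is that one must work on the event $\{W_{\tau,z}>0\}$ for the first-jump analysis to be non-degenerate; this is automatic from \eqref{eq Kesten-Stigum 2} once one restricts to $z\in O_\tau \cap E$, as in the statement.
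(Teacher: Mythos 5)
Your reduction of the problem to a Cox process with cumulative intensity $I_n(t;z)=\int_0^t\lambda(n)\,|S_{u,z}\cap B_n|\,du$ contains a genuine gap. The premise ``as long as no vertex of the cluster of $z$ has been hit by lightning, that cluster in the forest-fire model coincides with $S_{u,z}$'' is false: lightning may strike a region of the tree \emph{before} it attaches (in the pure-growth sense) to $z$'s cluster; such a fire burns vertices that later belong to $S^n_{u,z}$ but are vacant in the forest-fire configuration, so $C^n_{u,z}$ can be strictly smaller than $S^n_{u,z}$ even though no strike has ever hit $z$'s own forest-fire cluster. Consequently a jump of your Cox process only means that lightning hits \emph{some} vertex of the pure-growth cluster; it need not destroy $z$, because the struck vertex may be disconnected from $z$ in $(\eta^n_{t,w})_w$. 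The upper-bound half survives (any strike destroying $z$ must hit $C^n_{u,z}\subset S^n_{u,z}$, so $I_n(\tau;z)\to 0$ does rule out destruction of $z$ before $\tau$ in part (i), modulo a uniformity-in-$u$ argument for the integral), but the ``destruction occurs'' half of both (i) and (ii) requires a \emph{lower} bound on $\lambda(n)\,|C^n_{u,z}|$ near time $\tau$, i.e.\ a proof that despite earlier fires the forest-fire cluster still has order $m(\tau)^n$ vertices. That is precisely the technical core of the paper (Lemmas \ref{lem distance lightning}--\ref{lem depth destruction}: $O(f(n))$ lightnings, only in the top $O(\log n)$ generations, destruction depth $O(f(n)\log n)$, whence $|S^n_{\tau_n,x}\setminus C^n_{\tau_n,x}|\le N^n\,r^{J^n+1}/(r-1)$ is negligible compared with $m(\tau_n)^n$), and your proposal neither proves nor invokes any of it; without it, ``forces at least one destruction in $(\tau,\tau+\delta)$'' (resp.\ in $(\tau-\delta,\tau)$) does not follow.

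Two further points. First, Proposition \ref{prop replacement Kesten-Stigum} provides only two-sided tail bounds in probability, uniform over $t$ and $n$; it does not yield an approximation $|S^n_{u,z}|\approx c_u W_{u,z}\,m(u)^n$ uniformly in $u$, nor any regularity of $u\mapsto W_{u,z}$, so the displayed asymptotics for $I_n(\tau+h;z)$ are not available as stated (order-of-magnitude bounds plus monotonicity in $u$ and a time-splitting at $\tau-O(\log n/n)$ would suffice, and that splitting -- not a Kesten--Stigum error term -- is where the $\log n$ in $g(n)\ll n/\log n$ is really used). Second, the paper's actual proof of this theorem is much shorter than your plan because it performs no new intensity computation at all: it re-uses the intermediate statement (\ref{eq thm general convergence with tau_n}), proved with the shifted times $\tau_n=m^{-1}\bigl(\sqrt[n]{f(n)/\lambda(n)}\bigr)$, and exploits the freedom in choosing the auxiliary function $f$: for (i) one picks $f\geq g$ (still $f(n)\ll n/\log n$), which forces $\tau_n\geq\tau$ and $\tau_n\to\tau$; for (ii) one takes $f(n)=n^{\gamma}\leq g(n)$, so $\tau_n\leq\tau$, and checks $(\tau-\tau_n)f(n)\to\infty$ together with $(\tau-\tau_n)\lambda(n)f(n)r^{cf(n)\log n}\to 0$ to get destruction already inside $(\tau_n,\tau)$. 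If you import the cluster-size control for $C^n$ from those lemmas, your Cox-process bookkeeping essentially reproduces that computation; without it the argument is incomplete.
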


Theorems \ref{thm general convergence} and \ref{thm refined convergence} will be proved in Sections \ref{sec proof general convergence} and \ref{sec proof refined convergence}. Before, we give an interpretation of Theorem~\ref{thm general convergence} in terms of self-destructive percolation.

\subsection{Interpretation in terms of self-destructive percolation}

\begin{defn} \label{defn self-destructive percolation process}
Let $\tau \in (t_c, \infty)$, let $\epsilon > 0$ and let $(\sigma_{t, z}, G_{t, z})_{t \geq 0, z \in T}$ be a pure growth process on $T$. For $t \geq 0$, let $O_t$ be defined as in Definition \ref{defn pure growth process}. We define $\rho_{t, z}$ for $0 \leq t \leq \tau + \epsilon, z \in T$ in three steps: \\
Firstly,
\begin{align*}
\rho_{t, z} &:= \sigma_{t, z} \text{,} \qquad 0 \leq t < \tau, z \in T \text{,} \\
\intertext{i.e.\ at time $0$ all vertices are vacant and between time $0$ and time $\tau$ vertices become occupied at rate~$1$, independently for all vertices.
Secondly,}
\rho_{\tau, z} &:= \sigma_{\tau, z} \, 1_{\{z \not\in O_{\tau}\}} \text{,} \qquad z \in T \text{,} \\
\intertext{i.e.\ at time $\tau$ all infinite occupied clusters are destroyed.
Thirdly,}
\rho_{t, z} &:= \rho_{\tau, z} \vee 1_{\{G_{t, z} - G_{\tau, z} > 0\}} \text{,} \qquad \tau < t \leq \tau + \epsilon, z \in T \text{,}
\end{align*}
i.e.\ between time $\tau$ and time $\tau + \epsilon$ vertices become occupied at rate $1$, independently for all vertices and independently of what happened between time $0$ and time $\tau$.
Then $(\rho_{t, z}, G_{t, z})_{0 \leq t \leq \tau + \epsilon, z \in T}$ is called a \textbf{self-destructive percolation process} on $T$ with parameters $\tau$ and $\epsilon$.
\end{defn}

Self-destructive percolation was first introduced by J.\ van den Berg and R.\ Brouwer in \cite{berg2004destructive} and has subsequently also been studied in \cite{berg2008continuity} \cite{berg2009bounds} \cite{ahlberg2013nonamenable} and \cite{ahlberg2013seven}. For our purposes, the following property of self-destructive percolation is of particular importance:

\begin{prop} \label{prop critical epsilon}
For all $\tau \in (t_c, \infty)$ there exists $\epsilon > 0$ such that a.s.\ there is no infinite cluster in the final configuration $(\rho_{\tau + \epsilon, z})_{z \in T}$ of a self-destructive percolation process $(\rho_{t, z}, G_{t, z})_{0 \leq t \leq \tau + \epsilon, z \in T}$ on $T$ with parameters $\tau$ and $\epsilon$.
\end{prop}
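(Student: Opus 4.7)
The plan is to prove the absence of infinite clusters in $\rho_{\tau+\epsilon}$ by a first-moment argument on occupied downward paths, exploiting that after the destruction step the surviving configuration is strictly subcritical in a Galton--Watson sense. First I would reduce to the root: any infinite cluster in $\rho_{\tau+\epsilon}$ contains an infinite self-avoiding occupied walk, and since $T$ is rooted this walk ultimately traces an infinite downward path from some vertex $v$; moreover, performing the destruction step only within the subtree $T(v) := \{w \in T : v \preceq w\}$ removes only clusters that are already infinite inside $T(v)$, so the ``local'' process $\rho^{T(v)}_{\tau+\epsilon}$ dominates $\rho_{\tau+\epsilon}$ pointwise on $T(v)$. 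Since $T(v) \cong T$ as rooted trees, a countable union over $v \in T$ reduces the claim to
\begin{align*}
\mf{P}\bigl[\emptyset \text{ has an infinite occupied downward path in } \rho_{\tau+\epsilon}\bigr] = 0 .
\end{align*}

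Let $N_n$ be the number of length-$n$ occupied downward paths from $\emptyset$ in $\rho_{\tau+\epsilon}$; there are $r^n$ candidates. With $\delta := 1 - e^{-\epsilon}$ and $B_z := 1_{\{G_{\tau+\epsilon, z} - G_{\tau, z} > 0\}}$, the $(B_z)_{z \in T}$ are i.i.d.\ Bernoulli$(\delta)$ independent of $\rho_\tau$, and $\rho_{\tau+\epsilon, z} = \rho_{\tau, z} \vee B_z$. Conditioning on $\rho_\tau$ and expanding the resulting product yields, for any fixed path $\gamma = (v_0, \ldots, v_n)$,
\begin{align*}
\mf{P}[\gamma \text{ occupied in } \rho_{\tau+\epsilon}] = \sum_{A \subseteq \{0, \ldots, n\}} (1-\delta)^{|A|} \delta^{n+1-|A|} \, \mf{P}[\rho_{\tau, v_i} = 1 \ \forall \, i \in A] .
\end{align*}

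The heart of the proof is bounding the right-hand probability. Decompose $A$ into its maximal consecutive segments. If all vertices of a segment are $\rho_\tau$-occupied then they belong to a common \emph{finite} $\sigma_\tau$-cluster, and consequently the flanking path-vertex immediately outside the segment must be $\sigma_\tau$-vacant: otherwise it would join that cluster, forcing it to lie in $A$ (if the resulting cluster is still finite) or destroying the whole segment (if the cluster becomes infinite). Given these flanking vacancies the segments' clusters live in vertex-disjoint off-path subtrees, so a direct Galton--Watson extinction computation (each of the $r-1$ off-path children of every segment vertex must be vacant or have extinct descent) yields
\begin{align*}
\mf{P}[\rho_{\tau, v_i} = 1 \ \forall \, i \in A] \le (p\psi^{r-1})^{|A|} ,
\end{align*}
where $p := 1 - e^{-\tau}$, $\phi \in [0,1)$ is the extinction probability of the $\operatorname{Bin}(r, p)$ branching process, and $\psi := 1 - p + p\phi = \phi^{1/r}$. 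The decisive inequality is that $\mu := rp\psi^{r-1}$ coincides with $f'(\phi)$ for $f(s) := (1-p+ps)^r$ and satisfies $\mu < 1$ strictly whenever $\tau > t_c$, reflecting that the branching process conditioned on extinction is strictly subcritical.

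Combining these gives $\mf{P}[\gamma \text{ occupied}] \le [\delta + (1-\delta)p\psi^{r-1}]^{n+1}$ and therefore $\mf{E}[N_n] \le r^{-1}[r\delta + (1-\delta)\mu]^{n+1}$. Since $\mu < 1 < r$, the threshold $\delta^* := (1-\mu)/(r-\mu)$ is strictly positive, so choosing $\epsilon > 0$ small enough that $\delta < \delta^*$ makes $\mf{E}[N_n]$ decay geometrically; Markov's inequality and monotonicity in $n$ then force $\mf{P}[\emptyset \text{ has an infinite downward path}] = 0$. The main obstacle is precisely the segment bound: the naive domination $\rho_{\tau+\epsilon} \le \sigma_\tau \vee B$ replaces the subcritical weight $p\psi^{r-1} < 1/r$ by the supercritical $p > 1/r$ and produces $\mf{E}[N_n] \to \infty$ for every $\delta > 0$; extracting subcriticality genuinely requires the flanking-vacancy decoupling of the segments.
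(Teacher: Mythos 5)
Your overall route is viable and genuinely different from the paper's: instead of a direct first-moment bound, the paper works with $\theta(p,\delta)$, assumes $\theta(p,\delta)>0$ for all small $\delta$, derives $1\leq pr(1-\theta(p))^{r-1}$ by an inclusion--exclusion at the root, and reaches a contradiction as $p\downarrow p_c$; that argument yields no explicit $\epsilon$, whereas your computation (once correct) would give the explicit threshold $\delta<(1-\mu)/(r-\mu)$ with $\mu=rp\psi^{r-1}=f'(\phi)<1$. However, there is a genuine gap exactly at the step you yourself call the heart of the proof. In the expansion $\mf{P}[\gamma\text{ occupied}]=\sum_A(1-\delta)^{|A|}\delta^{n+1-|A|}\,\mf{P}[\rho_{\tau,v_i}=1\ \forall i\in A]$, the set $A$ is a \emph{deterministic} index set produced by multiplying out $\prod_i(\delta+(1-\delta)\rho_{\tau,v_i})$; the event $\{\rho_{\tau,v_i}=1\ \forall i\in A\}$ imposes no constraint at indices outside $A$. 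Consequently the flanking path-vertex of a maximal segment of $A$ is \emph{not} forced to be $\sigma_\tau$-vacant: it may well be $\sigma_\tau$-occupied and belong to the same finite cluster, because ``$i-1\notin A$'' does not mean ``$\rho_{\tau,v_{i-1}}=0$''. The phrase ``forcing it to lie in $A$'' conflates the fixed set $A$ with the random set $\{i:\rho_{\tau,v_i}=1\}$, so the flanking-vacancy decoupling, as written, fails.

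The inequality $\mf{P}[\rho_{\tau,v_i}=1\ \forall i\in A]\leq(p\psi^{r-1})^{|A|}$ is nevertheless true, and there are two clean repairs. (a) Decompose over the exact random set instead: $\mf{P}[\gamma\text{ occupied}]=\sum_A\delta^{n+1-|A|}\,\mf{P}[\{i:\rho_{\tau,v_i}=1\}=A]$; now the indices off $A$ really are $\rho_\tau$-vacant, and your flanking argument becomes valid (an occupied flank would either make the flank $\rho_\tau$-occupied or put the segment in an infinite cluster, both contradictions), at the price of redoing the bookkeeping without the factor $(1-\delta)^{|A|}$, which still gives geometric decay for small $\delta$. (b) Keep your expansion and prove the bound without segments: observe that $\{\rho_{\tau,v_i}=1\}\subset F_i$, where $F_i:=\{\sigma_{\tau,v_i}=1\}\cap\bigcap_c\{\text{the occupied descent of }c\text{ inside its own subtree is finite}\}$, the intersection running over the $r-1$ children $c$ of $v_i$ not on the path (if $v_i$'s cluster is finite, so is every such descent). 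The events $F_0,\dots,F_n$ depend on pairwise disjoint vertex sets, hence are independent with $\mf{P}[F_i]=p\psi^{r-1}$, which gives the product bound for every $A$ at once. With either repair, the remaining steps (strict subcriticality $\mu<1$, geometric decay of $\mf{E}[N_n]$, and the reduction via domination by the process restricted to $T(v)$ together with a countable union over $v$) are correct as you state them.
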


For the case where $T$ is the binary tree (i.e.\ $r = 2$), this has already been proved by J.\ van den Berg and R.\ Brouwer (\cite{berg2004destructive}, Theorem 5.1). The proof of Proposition~\ref{prop critical epsilon} for general $r$ is based on an extension of the ideas in \cite{berg2004destructive} and will be given in Section \ref{sec proof critical epsilon}.

Theorem \ref{thm general convergence} and Proposition \ref{prop critical epsilon} imply that given $\tau \in (t_c, \infty)$, we can choose $\epsilon > 0$ such that between time $0$ and time $\tau + \epsilon$ every forest-fire process on $B_n$ with parameter $\lambda(n) \approx 1/m(\tau)^n$ converges to the self-destructive percolation process on $T$ with parameters $\tau$ and $\epsilon$. The formal statement is as follows:

\begin{defn}
Let $n \in \mb{N}$ and let $\lambda(n) > 0$. Moreover, let $\tau \in (t_c, \infty)$ and $\epsilon > 0$. We say that a forest-fire process $(\eta^n_{t, z}, G_{t, z}, I^n_{t, z})_{t \geq 0, z \in B_n}$ on $B_n$ with parameter $\lambda(n)$ and a self-destructive percolation process $(\tilde \rho_{t, z}, \tilde G_{t, z})_{0 \leq t \leq \tau + \epsilon, z \in T}$ on $T$ with parameters $\tau$ and $\epsilon$ are \textbf{coupled in the canonical way} if they are realized on the same probability space and $(G_{t, z})_{0 \leq t \leq \tau + \epsilon, z \in B_n} = (\tilde G_{t, z})_{0 \leq t \leq \tau + \epsilon, z \in B_n}$ holds.
\end{defn}

\begin{cor} \label{cor general convergence}
Let $\tau \in (t_c, \infty)$, let $\epsilon > 0$ be as in Proposition \ref{prop critical epsilon} and suppose that $\lambda: \mb{N} \rightarrow (0, \infty)$ satisfies $\lambda(n) \approx 1/m(\tau)^n$ for $n \rightarrow \infty$. For $n \in \mb{N}$, let $(\eta^n_{t, z}, G_{t, z}, I^n_{t, z})_{t \geq 0, z \in B_n}$ be a forest-fire process on $B_n$ with parameter $\lambda(n)$ and let $(\rho_{t, z}, G_{t, z})_{0 \leq t \leq \tau + \epsilon, z \in T}$ be a self-destructive percolation process on $T$, coupled in the canonical way under some probability measure $\mf{P}$. Then for all finite subsets $E \subset T$ and for all $\delta \in (0, \epsilon)$,
\begin{align} \label{cor general convergence eq statement}
\lim_{n \rightarrow \infty} \mf{P} \left[ \sup_{z \in E, 0 \leq t \leq \tau - \delta} \left| \eta^n_{t, z} - \rho_{t, z} \right| = 0, \sup_{z \in E, \tau + \delta \leq t \leq \tau + \epsilon} \left| \eta^n_{t, z} - \rho_{t, z} \right| = 0 \right] = 1
\end{align}
holds.
\end{cor}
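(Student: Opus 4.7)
The first half of (\ref{cor general convergence eq statement}), concerning $t \in [0, \tau - \delta]$, is immediate from Theorem \ref{thm general convergence} and the identity $\rho_{t,z} = \sigma_{t,z}$ for $t < \tau$ in Definition \ref{defn self-destructive percolation process}. Hence the task reduces to controlling the supremum over $t \in [\tau + \delta, \tau + \epsilon]$. The plan for this part is a locality argument that combines Theorem \ref{thm general convergence} with Proposition \ref{prop critical epsilon}.

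Fix $\nu > 0$. By Proposition \ref{prop critical epsilon} the cluster of every $z \in E$ in $\rho_{\tau + \epsilon}$ is a.s.\ finite; since $|E| < \infty$, there is a deterministic $N = N(\nu) \in \mb{N}$ such that the event
\begin{align*}
A_N := \left\{ \text{for every } z \in E, \text{ the cluster of } z \text{ in } \rho_{\tau+\epsilon} \text{ together with all its neighbours lies in } B_N \right\}
\end{align*}
has $\mf{P}$-probability at least $1 - \nu$. Because $(\rho_{t,z})$ is non-decreasing in $t$ on $[\tau, \tau+\epsilon]$, on $A_N$ each such cluster remains inside $B_N$ throughout $[\tau, \tau+\epsilon]$ and its boundary vertices stay vacant in $\rho$ over the same interval.

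Pick an auxiliary $\delta' \in (0, \delta)$ and apply Theorem \ref{thm general convergence} with $E$ replaced by $B_N$ and $\delta$ replaced by $\delta'$. Since $|B_N|$ is fixed and $\lambda(n) \to 0$, the probability of any lightning striking $B_N$ during $[\tau + \delta', \tau + \epsilon]$ also tends to $0$. Hence, with probability tending to $1$, the following four events hold simultaneously: (a) $\eta^n = \sigma$ on $B_N$ throughout $[0, \tau - \delta']$; (b) each $z \in O_\tau \cap B_N$ is destroyed in the FFP at some time $t^*_z \in (\tau - \delta', \tau + \delta')$; (c) no lightning strikes any vertex of $B_N$ during $[\tau + \delta', \tau + \epsilon]$; and (d) $A_N$ occurs. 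On their intersection a case analysis, exploiting that the $\rho$-boundary of $E$'s clusters is vacant and therefore conducts no destruction from outside $B_N$ back into those clusters, shows that for every $z \in E$ and every $t \in [\tau + \delta', \tau + \epsilon]$:
\begin{compactenum}[(i)]
\item if $z \in E \setminus O_\tau$, then $\eta^n_{t,z} = \sigma_{t,z} = \rho_{t,z}$;
\item if $z \in E \cap O_\tau$, then $\eta^n_{t,z} = 1_{\{G_{t,z} > G_{t^*_z, z}\}}$, which agrees with $\rho_{t,z} = 1_{\{G_{t,z} > G_{\tau, z}\}}$ unless $z$ has a growth event in the interval between $t^*_z$ and $\tau$, an event of probability at most $1 - e^{-\delta'}$.
\end{compactenum}
A union bound over $z \in E$ therefore yields
\begin{align*}
\limsup_{n \to \infty} \mf{P}\left[ \sup_{z \in E, \tau + \delta \le t \le \tau + \epsilon} |\eta^n_{t,z} - \rho_{t,z}| \neq 0 \right] \le \nu + |E| \left( 1 - e^{-\delta'} \right),
\end{align*}
and sending first $\delta' \downarrow 0$ and then $\nu \downarrow 0$ completes the proof.

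The main obstacle is the case analysis leading to (i)--(ii). One must rule out, on the above joint event, both the spurious destruction of a finite FFP-cluster meeting $E$ during the narrow window $(\tau - \delta', \tau + \delta')$ and any phantom merging of $E$'s finite cluster with an $O_\tau$-cluster via a newly-occupied boundary vertex before the $O_\tau$-cluster is itself destroyed. Both phenomena are controlled by a combination of $\lambda(n)|B_N| \to 0$, the smallness of $\delta'$, and the vacancy of the $\rho$-boundary guaranteed by $A_N$.
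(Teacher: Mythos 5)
Your overall strategy is the same localization idea as the paper's proof (reduce to a finite region via Proposition \ref{prop critical epsilon}, use Theorem \ref{thm general convergence} for the destruction window, use that the lightning rate times the size of a fixed finite set tends to zero afterwards, then a case distinction $z \in O_\tau$ versus $z \notin O_\tau$), but the step you yourself flag as ``the main obstacle'' is exactly the step that is missing, and your claimed error bound $\nu + |E|(1 - e^{-\delta'})$ is not justified by the events (a)--(d). The problem is that your window has the \emph{fixed} length $\delta'$, so growth events at vertices of $B_N$ \emph{other than} those of $E$ occur during $(\tau - \delta', \tau + \delta')$ with probability bounded away from $0$ as $n \to \infty$, and none of (a)--(d) excludes them. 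Concretely: for $z \in E \setminus O_\tau$ the boundary of $S_{\tau, z}$ is vacant at time $\tau$, but a boundary vertex may grow at some $s \in (\tau, \tau + \delta')$ and attach $z$'s finite cluster to a neighbouring $O_\tau$-cluster that has not yet been destroyed in the forest-fire process (its destruction time may be anywhere in $(\tau - \delta', \tau + \delta')$, and the cluster reaches far outside $B_N$, so event (c) gives no protection); the lightning that then destroys that cluster also destroys $z$, producing $\eta^n_{t, z} = 0 \neq \rho_{t, z}$ on $[\tau + \delta', \tau + \epsilon]$. Similarly, an $O_\tau$-vertex on the boundary of $R_{\tau + \epsilon, z}$ whose forest-fire destruction time lies before $\tau$ may regrow during $(\tau - \delta', \tau)$ (this is compatible with its being vacant in $\rho_{\tau+\epsilon}$) and reconnect $z$'s cluster to the outside of $B_N$ on $[\tau + \delta', \tau + \epsilon]$, again exposing $z$ to lightning outside $B_N$. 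Also, you never exclude an ignition inside $B_N$ \emph{before} $\tau + \delta'$ hitting the finite cluster of some $z \in E \setminus O_\tau$ directly. These scenarios have probabilities of order $|B_N(\nu)|\,(1 - e^{-\delta'})$ and $(\tau+\epsilon)\lambda(n)|B_N|$ respectively, not of order $|E|(1 - e^{-\delta'})$, so they must appear as additional terms; since $\delta'$ is sent to $0$ before $\nu$ (and $\lambda(n)|B_N| \to 0$), the argument is repairable, but as written the case analysis (i)--(ii) is asserted rather than proved and the union bound omits exactly the dangerous events.

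For comparison, the paper avoids this bookkeeping entirely by two devices: it conditions on the exact closure $A$ of the final cluster $R_{\tau+\epsilon, x}$ (so the relevant boundary is pinned down and its vacancy in $\rho_{\tau+\epsilon}$ can be exploited deterministically), and, crucially, it lets the window shrink with $n$: a sequence $\alpha(n) \downarrow 0$ is chosen so that with probability tending to $1$ there is \emph{no growth at all} on $A$ during $[\tau - \alpha(n), \tau + \alpha(n)]$ and no ignition on $A$ up to $\tau + \alpha(n)$, while all vertices of $O_\tau \cap A$ are still destroyed inside the window. On that event the forest-fire and self-destructive configurations coincide on $A$ exactly at time $\tau + \alpha(n)$, and afterwards both evolve by pure growth on $A$ (no lightning on $A$ up to $\tau + \epsilon$), so no error terms for in-window growth, spurious burning or phantom merging are needed. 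If you want to keep your fixed-$\delta'$ scheme, you must add the missing events (no ignition on $B_N$ before $\tau + \delta'$, no growth during the window at the boundary vertices of the clusters of $E$ and no regrowth of destroyed boundary $O_\tau$-vertices before $\tau$) and the corresponding $O(|B_N|(1-e^{-\delta'}))$ terms, and then actually carry out the case analysis; otherwise, adopt the $n$-dependent window as in the paper.
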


\begin{proof}[Proof of Corollary \ref{cor general convergence} given Theorem~\ref{thm general convergence} and Proposition~\ref{prop critical epsilon}]
Let $\tau$, $\epsilon$, $\lambda$ be as in Corollary~\ref{cor general convergence}. Likewise, for $n \in \mb{N}$, let $(\eta^n_{t, z}, G_{t, z}, I^n_{t, z})_{t \geq 0, z \in B_n}$, $(\rho_{t, z}, G_{t, z})_{0 \leq t \leq \tau + \epsilon, z \in T}$ be as in Corollary~\ref{cor general convergence}. Moreover, let $E \subset T$ be a finite subset and let $\delta \in (0, \epsilon)$. For the proof of (\ref{cor general convergence eq statement}) we may assume without loss of generality that $E$ is a singleton, i.e.\ $E = \{x\}$ for some $x \in T$. In view of Theorem \ref{thm general convergence} it then suffices to prove
\begin{align} \label{cor general convergence eq statement a}
\lim_{n \rightarrow \infty} \mf{P} \left[ \sup_{\tau + \delta \leq t \leq \tau + \epsilon} \left| \eta^n_{t, x} - \rho_{t, x} \right| = 0 \right] = 1 \text{.}
\end{align}

Before we continue with the proof, let us introduce some notation: For a non-empty subset $S \subset T$, let
\begin{align*}
\partial S := \left\{ z \in T \setminus S: \left( \exists w \in S: \text{$z$ and $w$ are neighbours} \right)  \right\}
\end{align*}
be the boundary of $S$ in $T$. For $t \in [0, \tau + \epsilon]$ and $z \in T$, let $R_{t, z}$ denote the cluster of $z$ in the configuration $(\rho_{t, w})_{w \in T}$ and let
\begin{align*}
\overline{R}_{t, z} :=
\begin{cases}
R_{t, z} \cup \partial R_{t, z} & \text{if } R_{t, z} \not= \emptyset \text{,} \\
\{z\} & \text{if } R_{t, z} = \emptyset \text{,}
\end{cases}
\end{align*}
be its ``closure''. For $t \in [0, \tau + \epsilon]$, $z \in T$ and $n \in \mb{N}$ we similarly write $C^n_{t, z}$ for the cluster of $z$ in the configuration $(\eta^n_{t, w})_{w \in B_n}$ and define its closure by
\begin{align*}
\overline{C}^n_{t, z} :=
\begin{cases}
C^n_{t, z} \cup \partial C^n_{t, z} & \text{if } C^n_{t, z} \not= \emptyset \text{,} \\
\{z\} & \text{if } C^n_{t, z} = \emptyset \text{.}
\end{cases}
\end{align*}
Finally, we denote by $C_x^{\operatorname{fin}}$ the (countable) set of all finite connected subsets of $T$ which contain the site $x$.

Since $R_{\tau + \epsilon, x}$ (and hence $\overline{R}_{\tau + \epsilon, x}$) is a.s.\ finite by Proposition~\ref{prop critical epsilon}, we have the equality
\begin{align*}
\mf{P} \left[ \sup_{\tau + \delta \leq t \leq \tau + \epsilon} \left| \eta^n_{t, x} - \rho_{t, x} \right| = 0 \right]
= \sum_{A \in C_x^{\operatorname{fin}}} \mf{P} \left[ \sup_{\tau + \delta \leq t \leq \tau + \epsilon} \left| \eta^n_{t, x} - \rho_{t, x} \right| = 0, \overline{R}_{\tau + \epsilon, x} = A \right]
\end{align*}
for all $n \in \mb{N}$. So pick $A \in C_x^{\operatorname{fin}}$ and set $\mb{A} := \left\{ \overline{R}_{\tau + \epsilon, x} = A \right\}$. By the dominated convergence theorem, (\ref{cor general convergence eq statement a}) holds once we know
\begin{align} \label{cor general convergence eq statement b}
\lim_{n \rightarrow \infty} \mf{P} \left[ \left. \sup_{\tau + \delta \leq t \leq \tau + \epsilon} \left| \eta^n_{t, x} - \rho_{t, x} \right| = 0 \right| \mb{A} \right] = 1 \text{.}
\end{align}
It is thus enough to show (\ref{cor general convergence eq statement b}).

Given the set $A$, by Theorem~\ref{thm general convergence} we can choose a sequence $(\alpha(n))_{n \in \mb{N}}$ with $\alpha(n) > 0$ and $\lim_{n \rightarrow \infty} \alpha(n) = 0$ such that the event
\begin{align*}
\mb{C}_n := \Bigl\{ \forall z \in O_{\tau} \cap A \, \exists t \in (\tau - \alpha(n), \tau + \alpha(n)): \eta^n_{t^-, z} > \eta^n_{t, z}, \\
\forall z \in A: G_{\tau - \alpha(n), z} = G_{\tau + \alpha(n), z}, I^n_{\tau + \alpha(n), z} = 0 \Bigr\}
\end{align*}
(where $O_{\tau}$ is defined as in Definition~\ref{defn self-destructive percolation process} and $n \in \mb{N}$ is assumed to be large enough to ensure $A \subset B_n$) satisfies $\lim_{n \rightarrow \infty} \mf{P}[\mb{C}_n] = 1$. As an auxiliary step towards (\ref{cor general convergence eq statement b}), we prove that for all $n \in \mb{N}$ with $A \subset B_n$ the inclusion
\begin{align} \label{cor general convergence eq equality at delta(n)}
\mb{A} \cap \mb{C}_n
\subset \left\{ \forall z \in A: \eta^n_{\tau + \alpha(n), z} = \rho_{\tau + \alpha(n), z} \right\}
\end{align}
holds. So let $z \in A$, let $n \in \mb{N}$ be large enough to ensure $A \subset B_n$ and suppose that the event $\mb{A} \cap \mb{C}_n$ occurs. We distinguish two cases: \\
\emph{Case 1:} $z \in O_{\tau}$. Then there exists $t \in (\tau - \alpha(n), \tau + \alpha(n))$ such that $\eta^n_{t, z} = 0$ holds. Since $G_{\tau - \alpha(n), z} = G_{\tau + \alpha(n), z}$, it follows that we also have $\eta^n_{\tau + \alpha(n), z} = 0$. On the other hand, the assumption $z \in O_{\tau}$ implies $\rho_{\tau, z} = 0$, and from $G_{\tau - \alpha(n), z} = G_{\tau + \alpha(n), z}$ we again deduce $\rho_{\tau + \alpha(n), z} = 0$. Hence we conclude $\eta^n_{\tau + \alpha(n), z} = 0 = \rho_{\tau + \alpha(n), z}$. \\
\emph{Case 2:} $z \not\in O_{\tau}$. By construction $z \not\in O_{\tau}$ implies $\overline{R}_{t, z} \subset \overline{R}_{\tau + \epsilon, z}$ for all $t \in [0, \tau + \epsilon]$. Since we assume $\overline{R}_{\tau + \epsilon, x} = A$ and $z \in A$, we also have $\overline{R}_{\tau + \epsilon, z} \subset A$. In particular we see that $\overline{R}_{\tau - \alpha(n), z} \subset A$ holds. Together with the fact that $I^n_{\tau - \alpha(n), w} = 0$ for all $w \in A$ this yields $\overline{C}^n_{\tau - \alpha(n), z} = \overline{R}_{\tau - \alpha(n), z} \subset A$. If we now use that $G_{\tau - \alpha(n), w} = G_{\tau + \alpha(n), w}$ and $I^n_{\tau - \alpha(n), w} = I^n_{\tau + \alpha(n), w}$ hold for all $w \in A$, it follows that $\overline{C}^n_{\tau + \alpha(n), z} = \overline{R}_{\tau + \alpha(n), z}$, which shows $\eta^n_{\tau + \alpha(n), z} = \rho_{\tau + \alpha(n), z}$.

Having proved (\ref{cor general convergence eq equality at delta(n)}), we now observe that the event
\begin{align*}
\mb{D}_n := \left\{ \forall z \in A: I^n_{\tau + \alpha(n), z} = I^n_{\tau + \epsilon, z} \right\}
\end{align*}
also satisfies $\lim_{n \rightarrow \infty} \mf{P}[\mb{D}_n] = 1$ and that
\begin{align} \label{cor general convergence eq equality at epsilon}
\mb{A} \cap \mb{D}_n \cap \left\{ \forall z \in A: \eta^n_{\tau + \alpha(n), z} = \rho_{\tau + \alpha(n), z} \right\}
\subset \left\{ \sup_{z \in A, \tau + \alpha(n) \leq t \leq \tau + \epsilon} \left| \eta^n_{t, z} - \rho_{t, z} \right| = 0 \right\}
\end{align}
holds for all $n \in \mb{N}$ with $A \subset B_n$. Since we have $\lim_{n \rightarrow \infty} \mf{P} \left[ \left. \mb{C}_n \cap \mb{D}_n \right| \mb{A} \right] = 1$ and $\alpha(n) < \delta$ for $n$ large enough, equation (\ref{cor general convergence eq statement b}) follows from (\ref{cor general convergence eq equality at delta(n)}) and (\ref{cor general convergence eq equality at epsilon}).
\end{proof}

\section{Proof of Theorem \ref{thm general convergence}} \label{sec proof general convergence}

We first prove some general properties of the pure growth process in Section \ref{subsec properties pure growth process} before we come to the core of the proof of Theorem \ref{thm general convergence} in Section \ref{subsec proof general convergence}.

\subsection{Properties of the pure growth process} \label{subsec properties pure growth process}

Let $(\sigma_{t, z}, G_{t, z})_{t \geq 0, z \in T}$ be a pure growth process on $T$ under some probability measure $\mf{P}$. For $x \in T$, $t \geq 0$ and $n \in \mb{N}_0$, let $S_{t, x}$ denote the cluster of $x$ in the configuration $(\sigma_{t, z})_{z \in T}$ and let
\begin{align*}
S^n_{t, x} := S_{t, x} \cap B_n
\end{align*}
be the set of vertices in $S_{t, x}$ whose graph distance from the root $\emptyset$ is at most $n$. Recall the definition of $m(t)$ and $\sigma^2(t)$ in equations (\ref{eq mean binomial dist}) and (\ref{eq variance binomial dist}). We start with some estimates for the first and second moment of $|S^n_{t, \emptyset}|$ in the supercritical case $t > t_c$:

\begin{lem} \label{lem Galton Watson moments}
Let $t > t_c$ and $n \in \mb{N}_0$. Then we have
\begin{align}
1 - e^{-t} \leq \mf{E}_{\mf{P}} \left[ \frac{|S^n_{t, \emptyset}|}{m(t)^n} \right] &\leq \frac{m(t)}{m(t) - 1} \text{,} \label{lem Galton Watson moments eq 1st moment} \\
\mf{E}_{\mf{P}} \left[ \frac{|S^n_{t, \emptyset}|^2}{m(t)^{2n}} \right] &\leq \left( \frac{\sigma^2(t)}{m(t)(m(t) - 1)} + 1 \right) \left( \frac{m(t)}{m(t) - 1} \right)^2 \text{.} \label{lem Galton Watson moments eq 2nd moment}
\end{align}
\end{lem}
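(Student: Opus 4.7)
The plan is to reduce everything to standard first- and second-moment computations for a Galton-Watson (GW) process. As noted just before the lemma statement, conditionally on $\{\sigma_{t, \emptyset} = 1\}$ the cluster $S_{t, \emptyset}$ is distributed as the genealogical tree of a GW process started from a single ancestor with offspring distribution $\operatorname{Bin}(r, 1 - e^{-t})$, which has mean $m(t)$ and variance $\sigma^2(t)$. Writing $Z_k := |S_{t, \emptyset} \cap T_k|$ for the size of generation $k$ and $p := 1 - e^{-t} = \mf{P}[\sigma_{t, \emptyset} = 1]$, one has $|S^n_{t, \emptyset}| = \sum_{k = 0}^n Z_k$ and unconditional mean $\mf{E}_{\mf{P}}[Z_k] = p \cdot m(t)^k$.

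The first-moment bound (\ref{lem Galton Watson moments eq 1st moment}) is then immediate. For the lower bound, using $|S^n_{t, \emptyset}| \geq Z_n$ gives $\mf{E}_{\mf{P}}[|S^n_{t, \emptyset}|]/m(t)^n \geq p = 1 - e^{-t}$. For the upper bound, summing the geometric series yields
\[
\frac{\mf{E}_{\mf{P}}[|S^n_{t, \emptyset}|]}{m(t)^n} = p \sum_{k = 0}^n m(t)^{k - n} \leq p \cdot \frac{m(t)}{m(t) - 1} \leq \frac{m(t)}{m(t) - 1}.
\]

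For the second moment (\ref{lem Galton Watson moments eq 2nd moment}), I would combine the standard GW variance identity with the branching Markov property. The recursion $\operatorname{Var}(Z_k \mid Z_0 = 1) = \sigma^2(t) \cdot m(t)^{k - 1} (m(t)^k - 1)/(m(t) - 1)$ (valid for $k \geq 1$, trivial for $k = 0$) gives
\[
\mf{E}_{\mf{P}}[Z_k^2 \mid Z_0 = 1] \leq c \cdot m(t)^{2k}, \qquad c := 1 + \frac{\sigma^2(t)}{m(t)(m(t) - 1)}.
\]
For $j \leq k$, conditioning on the first $j$ generations and applying the one-step mean $k - j$ times gives $\mf{E}_{\mf{P}}[Z_j Z_k \mid Z_0 = 1] = m(t)^{k - j} \, \mf{E}_{\mf{P}}[Z_j^2 \mid Z_0 = 1] \leq c \cdot m(t)^{j + k}$. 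Summing this estimate over $0 \leq j, k \leq n$ and using $\mf{P}[Z_0 = 1] = p \leq 1$ to remove the conditioning, I obtain
\[
\frac{\mf{E}_{\mf{P}}[|S^n_{t, \emptyset}|^2]}{m(t)^{2n}} \leq c \left( \sum_{k = 0}^n m(t)^{k - n} \right)^2 \leq c \left( \frac{m(t)}{m(t) - 1} \right)^2,
\]
which is exactly (\ref{lem Galton Watson moments eq 2nd moment}).

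There is no real conceptual obstacle: the only ``work'' is verifying the standard GW variance recursion and the Markov identity for $\mf{E}_{\mf{P}}[Z_j Z_k \mid Z_0 = 1]$, both of which are routine second-moment computations for branching processes and require no ideas beyond the setup above.
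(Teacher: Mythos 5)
Your proof is correct and follows essentially the same route as the paper: both rest on the identification of the cluster with a Galton--Watson tree with $\operatorname{Bin}(r, 1-e^{-t})$ offspring, the standard mean/variance formulas for $Z_k$, the Markov identity $\mf{E}[Z_j Z_k \mid Z_0 = 1] = m(t)^{k-j}\,\mf{E}[Z_j^2 \mid Z_0 = 1]$, and geometric-series bounds. The only cosmetic difference is that the paper realizes $|S^n_{t,\emptyset}|$ as $U S_n$ with an independent Bernoulli variable $U$ on an auxiliary space, while you absorb the root-occupation probability $p = 1-e^{-t}$ by conditioning, which changes nothing of substance.
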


\begin{proof}
Let $t > t_c$ and abbreviate $m := m(t)$, $\sigma^2 := \sigma^2(t)$. We will prove (\ref{lem Galton Watson moments eq 1st moment}) and (\ref{lem Galton Watson moments eq 2nd moment}) by means of Galton-Watson theory. So let $X_{n, i}$, $n,i \in \mb{N}$, be i.i.d.\ $\{0, 1, \ldots, r\}$-valued random variables under some probability measure $\mf{\tilde P}$ such that $X_{n, i}$ is binomially distributed with parameters $r$ and $1 - e^{-t}$. (In particular, $X_{n, i}$ has mean $m$ and variance $\sigma^2$.) Define $Z_n$, $n \in \mb{N}_0$, recursively by $Z_0 := 1$ and $Z_n := \sum_{i = 1}^{Z_{n - 1}} X_{n, i}$, $n \in \mb{N}$, and set $S_n := \sum_{i = 0}^n Z_i$, $n \in \mb{N}_0$. Then $Z_n$, $n \in \mb{N}_0$, is a supercritical Galton-Watson process, and $Z_n$ has mean
\begin{align}
\mf{E}_{\mf{\tilde P}} \left[ Z_n \right]
&= m^n \label{lem Galton Watson moments eq offspring mean}
\intertext{and variance}
\mf{Var}_{\mf{\tilde P}} \left[ Z_n \right]
&= \sigma^2 m^{n - 1} \frac{m^n - 1}{m - 1} \label{lem Galton Watson moments eq offspring variance}
\end{align}
(see e.g.\ \cite{harris1963branching}, Section I.5). Moreover, let $U$ be a $\{0, 1\}$-valued random variable on the same probability space which is independent from $X_{n, i}$, $n,i \in \mb{N}$, and Bernoulli distributed with parameter $1 - e^{-t}$. Then the distribution of $|S^n_{t, \emptyset}|$ under $\mf{P}$ and the distribution of $US_n$ under $\mf{\tilde P}$ coincide, and $\mf{E}_{\mf{\tilde P}}[U] = 1 - e^{-t} \leq 1$. For the proof of (\ref{lem Galton Watson moments eq 1st moment}) and (\ref{lem Galton Watson moments eq 2nd moment}), it therefore suffices to show the following inequalities for $n \in \mb{N}_0$:
\begin{align}
m^n \leq \mf{E}_{\mf{\tilde P}} \left[ S_n \right]
&\leq \frac{m}{m - 1} m^n \text{,} \label{lem Galton Watson moments eq 1st moment alt} \\
\mf{E}_{\mf{\tilde P}} \left[ S_n^2 \right]
&\leq \left( \frac{\sigma^2}{m(m - 1)} + 1 \right) \left( \frac{m}{m - 1} \right)^2 m^{2n} \text{.} \label{lem Galton Watson moments eq 2nd moment alt}
\end{align}

\emph{Proof of (\ref{lem Galton Watson moments eq 1st moment alt}):} Using equation (\ref{lem Galton Watson moments eq offspring mean}), we obtain
\begin{align*}
\mf{E}_{\mf{\tilde P}} \left[ S_n \right]
= \sum_{i = 0}^n m^i
\leq \frac{m}{m - 1} m^n
\end{align*}
for all $n \in \mb{N}_0$, which proves both sides of (\ref{lem Galton Watson moments eq 1st moment alt}).

\emph{Proof of (\ref{lem Galton Watson moments eq 2nd moment alt}):} For $i \in \mb{N}_0$, we easily deduce from equations (\ref{lem Galton Watson moments eq offspring mean}) and (\ref{lem Galton Watson moments eq offspring variance})
\begin{align*}
\mf{E}_{\mf{\tilde P}} \left[ Z_i^2 \right]
= \sigma^2 m^{i - 1} \frac{m^i - 1}{m - 1} + m^{2i}
\leq \left( \frac{\sigma^2}{m(m - 1)} + 1 \right) m^{2i} \text{.}
\end{align*}
Furthermore, for $i,j \in \mb{N}_0$ with $i < j$, we have
\begin{align*}
\mf{E}_{\mf{\tilde P}} \left[ Z_i Z_j \right]
= \mf{E}_{\mf{\tilde P}} \left[ Z_i^2 \right] m^{j - i}
\leq \left( \frac{\sigma^2}{m(m - 1)} + 1 \right) m^{i + j} \text{.}
\end{align*}
We thus obtain
\begin{align*}
\mf{E}_{\mf{\tilde P}} \left[ S_n^2 \right]
= \sum_{i, j = 0}^n \mf{E}_{\mf{\tilde P}} \left[ Z_i Z_j \right]
\leq \left( \frac{\sigma^2}{m(m - 1)} + 1 \right) \sum_{i, j = 0}^n m^{i + j}
\end{align*}
for all $n \in \mb{N}_0$. The last sum can be bounded from above by
\begin{align*}
\sum_{i, j = 0}^n m^{i + j}
= \left( \sum_{i = 0}^n m^i \right)^2
\leq \left( \frac{m}{m - 1} \right)^2 m^{2n} \text{,}
\end{align*}
which completes the proof of (\ref{lem Galton Watson moments eq 2nd moment alt}).
\end{proof}

Recall equations (\ref{eq Kesten-Stigum 1}) and (\ref{eq Kesten-Stigum 2}). We now want to prove similar statements which are uniform in $t$. The price we pay for this kind of uniformity is that in contrast to (\ref{eq Kesten-Stigum 1}) and (\ref{eq Kesten-Stigum 2}), our statements are in probability rather than almost surely. The precise formulation is as follows:

\begin{prop} \label{prop replacement Kesten-Stigum}
Let $x \in T$ and $a > t_c$. Then we have
\begin{align} \label{prop replacement Kesten-Stigum eq upper bound}
\lim_{C \rightarrow \infty} \, \sup_{n \in \mb{N}:\, n > |x|} \, \sup_{t \in [a, \infty)} \mf{P} \left[ |S^n_{t, x}| > Cm(t)^n \right] = 0
\end{align}
and
\begin{align} \label{prop replacement Kesten-Stigum eq lower bound}
\lim_{c \downarrow 0} \, \sup_{n \in \mb{N}:\, n > |x|} \, \sup_{t \in [a, \infty)} \mf{P} \left[ |S^n_{t, x}| < cm(t)^n \left| |S_{t, x}| = \infty \right. \right] = 0 \text{.}
\end{align}
\end{prop}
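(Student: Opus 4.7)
The upper bound \eqref{prop replacement Kesten-Stigum eq upper bound} will follow from Markov's inequality once I show $\mf{E}_{\mf{P}}[|S^n_{t,x}|] \leq C_{a,|x|}\, m(t)^n$ uniformly for $n > |x|$ and $t \geq a$. To obtain this, I would decompose the cluster $S_{t,x}$ according to its root $y_j$ (the ancestor of $x$ in the cluster with smallest depth, $0 \leq j \leq |x|$): any $z \in S_{t,x}$ either lies on the ancestral path from $y_j$ down to $x$, or sits in the subtree of a ``side'' child $w$ of one of the vertices on this path (a child that is not itself on the path). Taking expectations, using independence of the side subtrees, and applying the first-moment bound from Lemma~\ref{lem Galton Watson moments} (each side subtree rooted at $w$ has the distribution of $S^{n-|w|}_{t,\emptyset}$), the resulting double sum collapses to a multiple of $m(t)^n$ via the identity $r(1-e^{-t}) = m(t)$. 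The constant depends only on $a$ and $|x|$.

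For the lower bound \eqref{prop replacement Kesten-Stigum eq lower bound}, my plan is first to reduce to $x = \emptyset$. On the event $\{|S_{t,x}| = \infty\}$ the cluster has a root $y_j$ for some $0 \leq j \leq |x|$, and conditional on this location, $|S^n_{t,x}|$ is distributed as $|S^{n-j}_{t,\emptyset}|$ subject to the auxiliary conditioning that the ancestral path from $y_j$ to $x$ is occupied. Since $j \leq |x|$ yields $m(t)^j \leq r^{|x|}$ and the extra conditioning involves only $|x|+1$ specific vertices, the reduction to the case $x = \emptyset$ costs at most a bounded multiplicative factor in the threshold $c$ and the conditional probability.

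For $x = \emptyset$ I would combine two ingredients. First, Paley--Zygmund applied with the moment bounds from Lemma~\ref{lem Galton Watson moments} gives $\mf{P}[|S^n_{t,\emptyset}| \geq c m(t)^n] \geq \gamma(a) > 0$ for $c$ sufficiently small, uniformly in $n$ and $t \geq a$. Second, the Galton--Watson process associated to $S_{t,\emptyset}$ and conditioned on extinction is subcritical with offspring mean bounded away from $1$ uniformly for $t \geq a$ (the mean approaches $1$ only as $t \downarrow t_c$), so by the Dwass formula its total progeny on extinction has exponentially decaying tails with rate $\beta_0 > 0$ uniform in $t \geq a$. This yields $\mf{P}[|S^n_{t,\emptyset}| \geq c m(t)^n,\, |S_{t,\emptyset}| < \infty] \leq C e^{-\beta_0\, c m(t)^n}$, which is negligible once $c m(t)^n$ is large.

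The main obstacle is that Paley--Zygmund alone only yields a conditional lower bound $\geq \gamma(a)/\mf{P}[|S_{t,\emptyset}|=\infty]$ on $\mf{P}[|S^n_{t,\emptyset}| \geq c m(t)^n \mid |S_{t,\emptyset}| = \infty]$, which need not tend to $1$ as $c \to 0$. To close this gap I would split the range of $n$: when $c m(t)^n < 1$ (i.e.\ for $n < \log(1/c)/\log m(a)$) one has $|S^n_{t,\emptyset}| \geq 1 > c m(t)^n$ automatically on survival, so the conditional probability is trivially $0$. For the complementary regime $c m(t)^n \geq 1$, which forces $n$ to be large when $c$ is small, I would invoke the $L^2$-convergence $|S^n_{t,\emptyset}|/m(t)^n \to \tilde W(t)$ at the geometric rate $m(t)^{-n}$ (extracted from the second-moment bound in Lemma~\ref{lem Galton Watson moments}) together with the fact that $\tilde W(t) > 0$ almost surely on survival, reducing via Chebyshev to controlling the lower tail of $\tilde W(t) \mid |S_{t,\emptyset}| = \infty$ at $0$. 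Uniformity in $t \geq a$ would finally be obtained by separately handling a compact range $[a, A]$ (where continuity of the offspring law in $t$ gives a uniform lower tail) and the tail $[A, \infty)$ (where the tree is nearly deterministic and $\tilde W(t)$ concentrates near $m(t)/(m(t)-1)$). I expect the bulk of the technical work to sit in making these $L^2$ and uniform-in-$t$ estimates precise.
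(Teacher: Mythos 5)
Your treatment of (\ref{prop replacement Kesten-Stigum eq upper bound}) and of the reduction to $x=\emptyset$ is sound and essentially the paper's: Markov's inequality with the first-moment bound of Lemma~\ref{lem Galton Watson moments}, and a decomposition over which ancestor of $x$ is the root of $S_{t,x}$, each subtree above an ancestor being a copy of $T$. Your Paley--Zygmund step is also the paper's starting point for the lower bound (the paper derives the same constant-probability bound from the two moment estimates by truncation and Cauchy--Schwarz), and you correctly identify the crux: a bound $\mf{P}[|S^n_{t,\emptyset}|\geq c\,m(t)^n]\geq\gamma(a)$ does not make the conditional probability in (\ref{prop replacement Kesten-Stigum eq lower bound}) tend to $1$.

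The gap is in how you propose to close that crux. In the regime $c\,m(t)^n\geq 1$ you reduce, via the $L^2$ rate, to showing $\sup_{t\geq a}\mf{P}\left[\tilde W(t)<c'\,\middle|\,|S_{t,\emptyset}|=\infty\right]\rightarrow 0$ as $c'\downarrow 0$; but this is exactly the statement to be proved (in its $n\rightarrow\infty$ form), and the justification you offer on the compact range, ``continuity of the offspring law in $t$'', does not deliver it. For fixed $t$ the claim is Kesten--Stigum ($\tilde W(t)>0$ a.s.\ on survival); to get uniformity on $[a,A]$ you would need something like weak continuity of $t\mapsto\operatorname{law}(\tilde W(t))$, absence of atoms of $\tilde W(t)$ on $(0,\infty)$, and a Dini-type compactness argument --- none of which you supply, and none of which follows from continuity of the offspring distribution alone. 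The paper circumvents this with an amplification device you are missing: using continuity and monotonicity of $t\mapsto\mf{P}[|S_{t,\emptyset}|=\infty]$ it chooses a \emph{finite} net $U\subset[a,\infty)$ of times and applies Kesten--Stigum only at these finitely many times to find $l,k$ such that survival at some $u\in U$ with $u\leq t$ essentially guarantees at least $k$ cluster vertices in generation $l$; since $u\leq t$, these vertices lie in $S_{t,\emptyset}$, the $k$ subtrees above them are i.i.d.\ copies of the time-$t$ configuration, and the constant $\delta$ from Paley--Zygmund gets boosted to $1-(1-\delta)^k$, uniformly in $n$ and $t$. Until you supply a mechanism of this kind (or an actual proof of the uniform lower tail of $\tilde W(t)$), the central step of (\ref{prop replacement Kesten-Stigum eq lower bound}) is asserted rather than proved. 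Two minor points: the subcritical-dual/Dwass exponential-tail ingredient is not needed and does not address the conditional statement; and the case split should be done pointwise in $(n,t)$ (the condition $c\,m(t)^n<1$ is not equivalent to $n<\log(1/c)/\log m(a)$, since $m(t)$ is not bounded above by $m(a)$), though the trivial case works anyway because on survival $|S^n_{t,\emptyset}|\geq 1$.
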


\begin{proof}
Let $a > t_c$.

\emph{Step 1:} We first prove (\ref{prop replacement Kesten-Stigum eq upper bound}) and (\ref{prop replacement Kesten-Stigum eq lower bound}) for $x = \emptyset$.

For $C > 0$ and $n \in \mb{N}$, $t \in [a, \infty)$, the Markov inequality and equation (\ref{lem Galton Watson moments eq 1st moment}) yield
\begin{align} \label{prop replacement Kesten-Stigum eq Markov}
\mf{P} \left[ \frac{|S^n_{t, \emptyset}|}{m(t)^n} \geq C \right]
\leq \frac{1}{C} \, \mf{E}_{\mf{P}} \left[ \frac{|S^n_{t, \emptyset}|}{m(t)^n} \right]
\leq \frac{1}{C} \frac{m(t)}{m(t) - 1} \text{.}
\end{align}
Since $m(t)$ is bounded away from $1$ for $t \in [a, \infty)$, this implies (\ref{prop replacement Kesten-Stigum eq upper bound}) for $x = \emptyset$.

As preparatory work for the proof of (\ref{prop replacement Kesten-Stigum eq lower bound}) we next show that there exist $c, \delta > 0$ such that
\begin{align} \label{prop replacement Kesten-Stigum eq weak lower bound}
\sup_{n \in \mb{N}} \sup_{t \in [a, \infty)} \mf{P} \left[ \frac{|S^n_{t, \emptyset}|}{m(t)^n} \geq c \right] \geq \delta
\end{align}
holds. For arbitrary $0 < c < C$ and $n \in \mb{N}$, $t \in [a, \infty)$, we have
\begin{align*}
1 - e^{-t}
&\leq \mf{E}_{\mf{P}} \left[ \frac{|S^n_{t, \emptyset}|}{m(t)^n} \right] \\
&\leq c + C \, \mf{P} \left[ \frac{|S^n_{t, \emptyset}|}{m(t)^n} \geq c \right] + \mf{E}_{\mf{P}} \left[ \frac{|S^n_{t, \emptyset}|}{m(t)^n} \, 1_{\{\frac{|S^n_{t, \emptyset}|}{m(t)^n} \geq C\}} \right] \text{,}
\end{align*}
where the first inequality is due to (\ref{lem Galton Watson moments eq 1st moment}) and the second inequality is obtained by distinguishing in which of the intervals $[0, c)$, $[c, C)$, $[C, \infty)$ the rescaled cluster size $|S^n_{t, \emptyset}|/m(t)^n$ lies. The last summand can be bounded from above by
\begin{align*}
\mf{E}_{\mf{P}} \left[ \frac{|S^n_{t, \emptyset}|}{m(t)^n} \, 1_{\{\frac{|S^n_{t, \emptyset}|}{m(t)^n} \geq C\}} \right]
&\leq \left( \mf{E}_{\mf{P}} \left[ \frac{|S^n_{t, \emptyset}|^2}{m(t)^{2n}} \right] \right)^{1/2} \left( \mf{P} \left[ \frac{|S^n_{t, \emptyset}|}{m(t)^n} \geq C \right] \right)^{1/2} \\
&\leq \frac{1}{C^{1/2}} \left( \frac{\sigma^2(t)}{m(t)(m(t) - 1)} + 1 \right)^{1/2} \left( \frac{m(t)}{m(t) - 1} \right)^{3/2} \text{,}
\end{align*}
where we first use the Cauchy-Schwarz inequality and then apply equations (\ref{lem Galton Watson moments eq 2nd moment}) and (\ref{prop replacement Kesten-Stigum eq Markov}). We thus obtain
\begin{align*}
\mf{P} \left[ \frac{|S^n_{t, \emptyset}|}{m(t)^n} \geq c \right]
\geq \frac{1}{C} \left( 1 - e^{-t} - c - \frac{1}{C^{1/2}} \left( \frac{\sigma^2(t)}{m(t)(m(t) - 1)} + 1 \right)^{1/2} \left( \frac{m(t)}{m(t) - 1} \right)^{3/2} \right) \text{.}
\end{align*}
Since $\sigma^2(t)/m(t) = e^{-t}$ and $m(t)$ is bounded away from $1$ for $t \in [a, \infty)$, this proves the existence of $c, \delta > 0$ satisfying (\ref{prop replacement Kesten-Stigum eq weak lower bound}).

We now prove (\ref{prop replacement Kesten-Stigum eq lower bound}). Intuitively, (\ref{prop replacement Kesten-Stigum eq lower bound}) follows from (\ref{prop replacement Kesten-Stigum eq weak lower bound}) because conditionally on $\{|S_{t, \emptyset}| = \infty\}$, the cluster $S_{t, \emptyset}$ contains arbitrarily many independent subtrees in which an asymptotic growth of the form (\ref{prop replacement Kesten-Stigum eq weak lower bound}) can occur. The formal proof goes as follows:

Let $\epsilon > 0$. We first construct a finite set $U \subset [a, \infty)$ such that
\begin{align} \label{prop replacement Kesten-Stigum eq set U}
\forall t \in [a, \infty) \, \exists u \in U: u \leq t,
\mf{P} \left[ |S_{u, \emptyset}| = \infty \right] \geq \mf{P} \left[ |S_{t, \emptyset}| = \infty \right] - \frac{\epsilon}{4}
\end{align}
holds: Define $f: [a, \infty) \rightarrow [0, 1], f(t) := \mf{P} \left[ |S_{t, \emptyset}| = \infty \right]$, and
\begin{align*}
R := \left\{ f(a) + i \frac{\epsilon}{4}: i \in \mb{N}_0 \right\} \cap [0, 1) \text{.}
\end{align*}
Then $R$ is clearly finite. Since $f$ is continuous, strictly monotone increasing and maps $[a, \infty)$ onto $[f(a), 1)$, it follows that $U := f^{-1}(R)$ is finite and satisfies (\ref{prop replacement Kesten-Stigum eq set U}).

Let $\delta, c > 0$ be as in equation (\ref{prop replacement Kesten-Stigum eq weak lower bound}). Given $\epsilon, \delta, c$, we choose constants $k, l \in \mb{N}$ in the following way: First, we take $k \in \mb{N}$ such that $(1 - \delta)^k \leq \epsilon/4$ holds. Then we choose $l \in \mb{N}$ such that
\begin{align} \label{prop replacement Kesten-Stigum eq symmetric difference}
\forall u \in U: \mf{P} \left[ \left\{ |S_{u, \emptyset} \cap T_l| \geq k \right\} \triangle \left\{ |S_{u, \emptyset}| = \infty \right\} \right] \leq \frac{\epsilon}{4}
\end{align}
holds, where $\triangle$ denotes the symmetric difference: For each individual $u \in U$ such an $l$ exists because of the Kesten-Stigum theorem (whose full statement is of course much stronger), and since the set $U$ is finite, we can choose $l$ uniformly for all $u \in U$. Finally, we set $\tilde c := c/r^l$. Now let $n \in \{l + 1, l + 2, \ldots\}$ and $t \in [a, \infty)$ be arbitrary. Given $t$, choose $u \in U$ as in (\ref{prop replacement Kesten-Stigum eq set U}). Then we can make the estimates
\begin{align}
\mf{P} \left[ \frac{|S^n_{t, \emptyset}|}{m(t)^n} \geq \tilde c, |S_{t, \emptyset}| = \infty \right]
&\geq \mf{P} \left[ \frac{|S^n_{t, \emptyset}|}{m(t)^n} \geq \tilde c, |S_{u, \emptyset}| = \infty \right] \nonumber \\
&\geq \mf{P} \left[ \frac{|S^n_{t, \emptyset}|}{m(t)^n} \geq \tilde c, |S_{u, \emptyset} \cap T_l| \geq k \right] - \frac{\epsilon}{4} \text{,} \label{prop replacement Kesten-Stigum eq large estimate 1}
\end{align}
where the first inequality holds because of $u \leq t$ and the second inequality follows from (\ref{prop replacement Kesten-Stigum eq symmetric difference}). On the event $\left\{ |S_{u, \emptyset} \cap T_l| \geq k \right\}$, let $Z_{u, 1}, \ldots, Z_{u, k}$ be an enumeration of the ``first'' $k$ vertices in $S_{u, \emptyset} \cap T_l$. For $z \in T$ let
\begin{align} \label{prop replacement Kesten-Stigum eq upward cluster 1}
\hat S_{t, z} := \left\{ v \in S_{t, z}: z \preceq v \right\}
\end{align}
be the part of the cluster of $S_{t, z}$ which lies in the $r$-regular rooted subtree of $T$ originating from $z$ and let
\begin{align} \label{prop replacement Kesten-Stigum eq upward cluster 2}
\hat S^n_{t, z} := \hat S_{t, z} \cap B_n \text{.}
\end{align}
Since $u \leq t$, on the event $\left\{ |S_{u, \emptyset} \cap T_l| \geq k \right\}$, we have $Z_{u, i} \in S_{t, \emptyset}$ and hence $\hat S_{t, Z_{u, i}} \subset S_{t, \emptyset}$ for all $i \in \{1, \ldots, k\}$. This gives
\begin{align}
\mf{P} \left[ \frac{|S^n_{t, \emptyset}|}{m(t)^n} \geq \tilde c, |S_{u, \emptyset} \cap T_l| \geq k \right]
&\geq \mf{P} \left[ \exists i \in \{1, \ldots k\}: \frac{|\hat S^n_{t, Z_{u, i}}|}{m(t)^n} \geq \tilde c, |S_{u, \emptyset} \cap T_l| \geq k \right] \nonumber \\
&= \left( 1 - \left( 1 - \mf{P} \left[ \left. \frac{|S^{n - l}_{t, \emptyset}|}{m(t)^n} \geq \tilde c \right| \sigma_{t, \emptyset} = 1 \right] \right)^k \right) \, \mf{P} \left[ |S_{u, \emptyset} \cap T_l| \geq k \right] \text{,} \label{prop replacement Kesten-Stigum eq large estimate 2}
\end{align}
where the last equality follows from the following observations about the pure growth process:
\begin{compactitem}
\item The configuration on $B_l$ at time $u$ and the configuration on $T \setminus B_l$ at time $t$ are independent.
\item The configurations at time $t$ on the $r$-regular rooted subtrees originating from the vertices in $T_l$ are independent and identically distributed as the configuration at time $t$ on the entire tree $T$.
\end{compactitem}
Using the inequality $\tilde c m(t)^l \leq c$ and the defining equations for $c$, $\delta$ and $k$, we can estimate the first factor in (\ref{prop replacement Kesten-Stigum eq large estimate 2}) by
\begin{align}
1 - \left( 1 - \mf{P} \left[ \left. \frac{|S^{n - l}_{t, \emptyset}|}{m(t)^n} \geq \tilde c \right| \sigma_{t, \emptyset} = 1 \right] \right)^k
&\geq 1 - \left( 1 - \mf{P} \left[ \frac{|S^{n - l}_{t, \emptyset}|}{m(t)^n} \geq \tilde c \right] \right)^k \nonumber \\
&\geq 1 - \left( 1 - \mf{P} \left[ \frac{|S^{n - l}_{t, \emptyset}|}{m(t)^{n - l}} \geq c \right] \right)^k \nonumber \\
&\geq 1 - \left( 1 - \delta \right)^k \geq 1 - \frac{\epsilon}{4} \label{prop replacement Kesten-Stigum eq large estimate 3} \text{.}
\end{align}
The second factor in (\ref{prop replacement Kesten-Stigum eq large estimate 2}) is bounded from below by
\begin{align}
\mf{P} \left[ |S_{u, \emptyset} \cap T_l| \geq k \right]
&\geq \mf{P} \left[ |S_{u, \emptyset}| = \infty \right] - \frac{\epsilon}{4} \nonumber \\
&\geq \mf{P} \left[ |S_{t, \emptyset}| = \infty \right] - \frac{\epsilon}{2} \label{prop replacement Kesten-Stigum eq large estimate 4}
\end{align}
because of (\ref{prop replacement Kesten-Stigum eq symmetric difference}) and (\ref{prop replacement Kesten-Stigum eq set U}). Putting equations (\ref{prop replacement Kesten-Stigum eq large estimate 1}), (\ref{prop replacement Kesten-Stigum eq large estimate 2}), (\ref{prop replacement Kesten-Stigum eq large estimate 3}) and (\ref{prop replacement Kesten-Stigum eq large estimate 4}) together, we obtain
\begin{align*}
\mf{P} \left[ \frac{|S^n_{t, \emptyset}|}{m(t)^n} \geq \tilde c, |S_{t, \emptyset}| = \infty \right]
\geq \mf{P} \left[ |S_{t, \emptyset}| = \infty \right] - \epsilon \text{.}
\end{align*}
Since this holds uniformly for $n \in \{l + 1, l + 2, \ldots\}$ and $t \in [a, \infty)$ and since $\mf{P} \left[ |S_{t, \emptyset}| = \infty \right] \geq \mf{P} \left[ |S_{a, \emptyset}| = \infty \right] > 0$ for all $t \in [a, \infty)$, we conclude
\begin{align} \label{prop replacement Kesten-Stigum eq lower bound epsilon}
\sup_{n \in \{l + 1, l + 2, \ldots\}} \sup_{t \in [a, \infty)} \mf{P} \left[ \left. \frac{|S^n_{t, \emptyset}|}{m(t)^n} \geq \tilde c \right| |S_{t, \emptyset}| = \infty \right]
\geq 1 - \frac{\epsilon}{\mf{P} \left[ |S_{a, \emptyset}| = \infty \right]} \text{.}
\end{align}
Additionally, we also have the trivial estimate
\begin{align*}
\sup_{n \in \{1, \ldots, l\}} \sup_{t \in [a, \infty)} \mf{P} \left[ \left. \frac{|S^n_{t, \emptyset}|}{m(t)^n} \geq \frac{1}{r^l} \right| |S_{t, \emptyset}| = \infty \right]
= 1
\end{align*}
for $n \in \{1, \ldots, l\}$. Together with (\ref{prop replacement Kesten-Stigum eq lower bound epsilon}) this proves (\ref{prop replacement Kesten-Stigum eq lower bound}) for $x = \emptyset$.

\emph{Step 2:} We now prove equations (\ref{prop replacement Kesten-Stigum eq upper bound}) and (\ref{prop replacement Kesten-Stigum eq lower bound}) for general $x \in T$. So let $x \in T$ and let $n \in \{|x| + 1, |x| + 2, \ldots\}$, $t \in [a, \infty)$. For both equations we distinguish which vertex $z$ of the finitely many ancestors of $x$ is the root of the cluster $S_{t, x}$ (the case $S_{t, x} = \emptyset$ being irrelevant) and then use the fact that the $r$-regular rooted subtree originating from $z$ is isomorphic to $T$. Let $\hat S_{t, z}$ and $\hat S^n_{t, z}$ be defined as in (\ref{prop replacement Kesten-Stigum eq upward cluster 1}) and (\ref{prop replacement Kesten-Stigum eq upward cluster 2}) respectively. Regarding (\ref{prop replacement Kesten-Stigum eq upper bound}) we then obtain for all $C > 0$
\begin{align*}
\mf{P} \left[ |S^n_{t, x}| > Cm(t)^n \right]
&= \sum_{z \in T:\, z \preceq x} \mf{P} \left[ |S^n_{t, x}| > Cm(t)^n, z \text{ is the root of } S_{t, x} \right] \\
&\leq \sum_{z \in T:\, z \preceq x} \mf{P} \left[ |\hat S^n_{t, z}| > Cm(t)^n \right] \\
&= \sum_{z \in T:\, z \preceq x} \mf{P} \left[ |S^{n - |z|}_{t, \emptyset}| > Cm(t)^n \right] \\
&\leq \sum_{z \in T:\, z \preceq x} \mf{P} \left[ |S^{n - |z|}_{t, \emptyset}| > Cm(a)^{|z|} \cdot m(t)^{n - |z|} \right] \text{,}
\end{align*}
and regarding (\ref{prop replacement Kesten-Stigum eq lower bound}) we similarly obtain for all $c > 0$
\begin{align*}
\mf{P} \left[ \left. |S^n_{t, x}| < cm(t)^n \right| |S_{t, x}| = \infty \right]
&= \sum_{z \in T:\, z \preceq x} \frac{\mf{P} \left[ |S^n_{t, x}| < cm(t)^n, z \text{ is the root of } S_{t, x}, |S_{t, x}| = \infty \right]}{\mf{P} \left[ |S_{t, x}| = \infty \right] } \\
&\leq \sum_{z \in T:\, z \preceq x} \frac{\mf{P} \left[ |\hat S^n_{t, z}| < cm(t)^n, |\hat S_{t, z}| = \infty \right]}{\mf{P} \left[ |S_{t, \emptyset}| = \infty \right] } \\
&= \sum_{z \in T:\, z \preceq x} \frac{\mf{P} \left[ |S^{n - |z|}_{t, \emptyset}| < cm(t)^n, |S_{t, \emptyset}| = \infty \right]}{\mf{P} \left[ |S_{t, \emptyset}| = \infty \right] } \\
&\leq \sum_{z \in T:\, z \preceq x} \mf{P} \left[ \left. |S^{n - |z|}_{t, \emptyset}| < cr^{|z|} \cdot m(t)^{n - |z|} \right| |S_{t, \emptyset}| = \infty \right] \text{.}
\end{align*}
Together with Step 1 this completes the proof of (\ref{prop replacement Kesten-Stigum eq upper bound}) and (\ref{prop replacement Kesten-Stigum eq lower bound}).
\end{proof}

\subsection{The core of the proof of Theorem~\ref{thm general convergence}} \label{subsec proof general convergence}

Throughout this section, consider the setup of Theorem~\ref{thm general convergence}: Let $\tau \in (t_c, \infty)$ and suppose that $\lambda: \mb{N} \rightarrow (0, \infty)$ satisfies $\lambda(n) \approx 1/m(\tau)^n$ for $n \rightarrow \infty$. For $n \in \mb{N}$, let $(\eta^n_{t, z}, G_{t, z}, I^n_{t, z})_{t \geq 0, z \in B_n}$ be a forest-fire process on $B_n$ with parameter $\lambda(n)$ and let $(\sigma_{t, z}, G_{t, z})_{t \geq 0, z \in T}$ be a pure growth process on $T$, coupled in the canonical way under some probability measure $\mf{P}$. As before, we use the following notation: For $x \in T$, $t \geq 0$ and $n \in \mb{N}$, let $S_{t, x}$ denote the cluster of $x$ in the configuration $(\sigma_{t, z})_{z \in T}$ of the pure growth process at time~$t$, let $S^n_{t, x} := S_{t, x} \cap B_n$ and let $O_t := \left\{ z \in T: |S_{t, z}| = \infty \right\}$. Similarly, let $C^n_{t, x}$ denote the cluster of $x$ in the configuration $(\eta^n_{t, z})_{z \in B_n}$ of the forest-fire process at time~$t$.

Choose an arbitrary function $f: \mb{N} \rightarrow (0, \infty)$ which satisfies
\begin{align} \label{eq condition on f}
1 \ll f(n) \ll \frac{n}{\log n} \qquad \text{for } n \rightarrow \infty \text{.}
\end{align}
Define a corresponding sequence $(\tau_n)_{n \in \mb{N}}$ of time points in such a way that $\lambda(n) = f(n)/m(\tau_n)^n$ holds, i.e.\
\begin{align*}
\tau_n := m^{-1} \left( \sqrt[n]{\frac{f(n)}{\lambda(n)}} \right) \text{,}
\end{align*}
where
\begin{align*}
m^{-1}(y) = \log \frac{r}{r - y} \text{,} \qquad y \in [0, r) \text{,}
\end{align*}
denotes the inverse function of $m(t)$, $t \geq 0$. Since $\sqrt[n]{\lambda(n)} \rightarrow 1/m(\tau)$ and $\sqrt[n]{f(n)} \rightarrow 1$ for $n \rightarrow \infty$ (the first limit follows from $\lambda(n) \approx 1/m(\tau)^n$ for $n \rightarrow \infty$, the second limit is a consequence of (\ref{eq condition on f})), we then have
\begin{align} \label{eq tau_n to tau}
\lim_{n \rightarrow \infty} \tau_n = \tau \text{.}
\end{align}
In particular, for all $x \in T$ it is true that
\begin{align} \label{eq S_tau_n to S_tau}
\lim_{n \rightarrow \infty} \mf{P} \left[ |S_{\tau_n, x}| = |S_{\tau, x}| \right] = 1 \text{.}
\end{align}
Equations (\ref{eq tau_n to tau}) and (\ref{eq S_tau_n to S_tau}) imply that for the proof of Theorem~\ref{thm general convergence}, it is enough to verify the following statement: For all finite subsets $E \subset T$ and for all $\delta > 0$
\begin{align} \label{eq thm general convergence with tau_n}
\lim_{n \rightarrow \infty} \mf{P} \left[ \sup_{z \in E, 0 \leq t \leq \tau_n} \left| \eta^n_{t, z} - \sigma_{t, z} \right| = 0, \forall z \in O_{\tau_n} \cap E \, \exists t \in (\tau_n, \tau_n + \delta): \eta^n_{t^-, z} > \eta^n_{t, z} \right] = 1
\end{align}
holds.

Since $E$ is finite, we may assume without loss of generality that $E$ is a singleton, i.e.\ $E = \{x\}$ for some $x \in T$. So let $x \in T$ and $\delta > 0$ be fixed and define
\begin{align*}
\mf{Q}_n \left[ \,\cdot\, \right]
:= \mf{P} \left[ \,\cdot\, \left| |S_{\tau_n, x}| = \infty \right. \right]
\end{align*}
for all $n \in \mb{N}$ which satisfy $\tau_n > t_c$. (Due to (\ref{eq tau_n to tau}) the case $\tau_n \leq t_c$ can only occur for finitely many $n$.) It then suffices to prove
\begin{align} \label{eq thm general convergence part 1}
\lim_{n \rightarrow \infty} \mf{P} \left[ \sup_{0 \leq t \leq \tau_n} \left| \eta^n_{t, x} - \sigma_{t, x} \right| = 0 \right] = 1
\end{align}
and
\begin{align} \label{eq thm general convergence part 2}
\lim_{n \rightarrow \infty} \mf{Q}_n \left[ \exists t \in (\tau_n, \tau_n + \delta): \eta^n_{t^-, x} > \eta^n_{t, x} = 0 \right] = 1 \text{.}
\end{align}

Before we go into the details, let us briefly outline the strategy for the proof of (\ref{eq thm general convergence part 1}) and (\ref{eq thm general convergence part 2}): We investigate how the vertices in the cluster $S^n_{\tau_n, x}$ of the pure growth process on $B_n$ at time $\tau_n$ behave in the forest-fire process on $B_n$ between time $0$ and time $\tau_n$. We will see that typically destruction only occurs in high generations of $S^n_{\tau_n, x}$ and only few vertices in $S^n_{\tau_n, x}$ are affected by destruction. This has two consequences: Firstly, it shows that (\ref{eq thm general convergence part 1}) holds indeed. Secondly, it implies that if $S_{\tau_n, x}$ is infinite, then the cluster $C^n_{\tau_n, x}$ has the same order of magnitude as $S^n_{\tau_n, x}$, namely $m(\tau_n)^n$. But since $\lambda(n) m(\tau_n)^n = f(n)$ and $f(n) \rightarrow \infty$ for $n \rightarrow \infty$, it follows that $C^n_{\tau_n, x}$ is typically hit by ignition soon after time $\tau_n$, which proves (\ref{eq thm general convergence part 2}).

We now make these arguments rigorous. In doing so, we will use the following Landau-type notation: If $X^n$, $n \in \mb{N}$, is a sequence of real-valued random variables under the probability measure $\mf{P}$ and $h: \mb{N} \rightarrow [0, \infty)$ is a non-negative function, we write
\begin{align*}
X^n \stackrel{\mf{P}}{=} O(h(n)) \text{ for } n \rightarrow \infty &:\Leftrightarrow \lim_{c \rightarrow \infty} \liminf_{n \rightarrow \infty} \mf{P} \left[ |X^n| \leq ch(n) \right] = 1 \text{;} \\
X^n \stackrel{\mf{P}}{=} \Omega(h(n)) \text{ for } n \rightarrow \infty &:\Leftrightarrow \lim_{c \rightarrow \infty} \liminf_{n \rightarrow \infty} \mf{P} \left[ |X^n| \geq \frac{1}{c} h(n) \right] = 1 \text{.}
\end{align*}

\begin{lem} \label{lem distance lightning}
Let
\begin{align*}
\iota^n := \inf \left\{ t \in [0, \tau_n): \left( \exists z \in S^n_{\tau_n, x}: I^n_{(\tau_n - t)^-, z} < I^n_{\tau_n - t, z} \right) \right\} \wedge \tau_n
\end{align*}
be the amount of time between $\tau_n$ and the last time of lightning in $S^n_{\tau_n, x}$ before $\tau_n$. (On the event $\{ \forall z \in S^n_{\tau_n, x}: I^n_{\tau_n, z} = 0 \}$ we have $\iota^n = \tau_n$ by definition.) Then we have
\begin{align*}
\iota^n \stackrel{\mf{P}}{=} \Omega \left( \frac{1}{f(n)} \right) \qquad \text{for } n \rightarrow \infty \text{.}
\end{align*}
\end{lem}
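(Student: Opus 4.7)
The plan is to exploit the independence between the growth processes (which determine $S^n_{\tau_n, x}$) and the ignition processes, combined with the uniform upper bound on $|S^n_{\tau_n, x}|$ from Proposition \ref{prop replacement Kesten-Stigum}.

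First I would observe that $S^n_{\tau_n, x}$ is measurable with respect to the $\sigma$-algebra generated by $(G_{t, z})_{0 \leq t \leq \tau_n, z \in B_n}$, which by condition [POISSON] in Definition \ref{defn forest-fire model} is independent of the ignition processes $(I^n_{t, z})_{t \geq 0, z \in B_n}$. Decomposing according to the (countably many) possible realisations $A$ of $S^n_{\tau_n, x}$, conditional on $\{S^n_{\tau_n, x} = A\}$ the processes $(I^n_{t, z})_{0 \leq t \leq \tau_n, z \in A}$ are still independent rate-$\lambda(n)$ Poisson processes. For $s \in [0, \tau_n]$ the event $\{\iota^n \geq s\}$ is precisely the event that none of these processes has a jump during $[\tau_n - s, \tau_n]$, and hence
\begin{align*}
\mf{P} \left[ \iota^n \geq s \left| S^n_{\tau_n, x} \right. \right]
= \exp \left( - |S^n_{\tau_n, x}| \, \lambda(n) \, s \right) \text{.}
\end{align*}

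Next, fix $\varepsilon > 0$. Since $\tau_n \to \tau > t_c$ by (\ref{eq tau_n to tau}), there is some $a > t_c$ with $\tau_n \geq a$ for all sufficiently large $n$. Proposition \ref{prop replacement Kesten-Stigum} then supplies a constant $C = C(\varepsilon) > 0$ such that $\mf{P}[|S^n_{\tau_n, x}| > C m(\tau_n)^n] < \varepsilon$ for all large $n$. Taking expectations in the conditional identity above, using the defining relation $\lambda(n) m(\tau_n)^n = f(n)$, and setting $s := 1/(c f(n))$ for $c > 0$, this gives
\begin{align*}
\mf{P} \left[ \iota^n \geq \frac{1}{c f(n)} \right]
\geq \exp \left( - \frac{C}{c} \right) \, (1 - \varepsilon)
\end{align*}
for all sufficiently large $n$ (note that $1/(c f(n)) \leq \tau_n$ eventually, because $f(n) \to \infty$ and $\tau_n \to \tau > 0$). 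Letting $c \to \infty$ and then $\varepsilon \downarrow 0$ yields $\lim_{c \to \infty} \liminf_{n \to \infty} \mf{P}[\iota^n \geq 1/(c f(n))] = 1$, which is exactly the definition of $\iota^n \stackrel{\mf{P}}{=} \Omega(1/f(n))$.

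I do not expect any substantive obstacle; the argument is essentially bookkeeping, and the only delicate point is the conditional-independence claim, which is handled cleanly by the tower property with respect to the $\sigma$-algebra generated by the growth processes. The real content is Proposition \ref{prop replacement Kesten-Stigum}, which provides the uniform-in-$n$ tail bound on $|S^n_{\tau_n, x}|/m(\tau_n)^n$; the scale $m(\tau_n)^n$ is calibrated precisely so that the total ignition rate on $S^n_{\tau_n, x}$ is of order $f(n)$, which is why $1/f(n)$ is the correct order of magnitude for $\iota^n$.
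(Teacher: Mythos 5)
Your proposal is correct and follows essentially the same route as the paper: condition on $S^n_{\tau_n, x}$ (using independence of growth and ignition), compute the exponential probability $\exp(-\lambda(n)|S^n_{\tau_n,x}|/(cf(n)))$ of no ignition in a window of length $1/(cf(n))$, and control $|S^n_{\tau_n,x}|$ by the uniform upper bound (\ref{prop replacement Kesten-Stigum eq upper bound}) of Proposition \ref{prop replacement Kesten-Stigum}, using $\lambda(n)m(\tau_n)^n = f(n)$. The only difference is cosmetic bookkeeping: the paper restricts to the event $\mb{E}_{n,\tilde c} = \{|S^n_{\tau_n,x}| \leq \tilde c\, m(\tau_n)^n\}$ and bounds $\mf{P}[\iota^n < 1/(cf(n)), \mb{E}_{n,\tilde c}]$ by $1 - \exp(-\tilde c/c)$, whereas you bound the complementary probability directly, which amounts to the same estimate.
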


\begin{proof}
Let $c, \tilde c > 0$, $n \in \mb{N}$ with $\tau_n \geq 1/(cf(n))$ and let
\begin{align} \label{lem distance lightning eq E}
\mb{E}_{n, \tilde c} := \left\{ |S^n_{\tau_n, x}| \leq \tilde c m(\tau_n)^n \right\} \text{.}
\end{align}
By Proposition \ref{prop replacement Kesten-Stigum}, equation (\ref{prop replacement Kesten-Stigum eq upper bound}), it suffices to show
\begin{align} \label{lem distance lightning eq essence}
\forall \tilde  c > 0: \lim_{c \rightarrow \infty} \limsup_{n \rightarrow \infty} \mf{P} \left[ \iota^n < \frac{1}{cf(n)}, \mb{E}_{n, \tilde c} \right] = 0 \text{.}
\end{align}

Indeed, we have
\begin{align*}
\mf{P} \left[ \iota^n < \frac{1}{cf(n)}, \mb{E}_{n, \tilde c} \right]
&= \mf{E}_{\mf{P}} \left[ \mf{P} \left[ \left. \iota^n < \frac{1}{cf(n)} \right| S^n_{\tau_n, x} \right] 1_{\mb{E}_{n, \tilde c}} \right] \\
&= \mf{E}_{\mf{P}} \left[ \left( 1 - \exp \left( -\frac{1}{cf(n)} \lambda(n) |S^n_{\tau_n, x}| \right) \right) 1_{\mb{E}_{n, \tilde c}} \right] \\
&\leq 1 - \exp \left( -\frac{\tilde c}{c} \right)
\xrightarrow[c \rightarrow \infty]{} 0 \text{,}
\end{align*}
which proves (\ref{lem distance lightning eq essence}).
\end{proof}

\begin{lem} \label{lem number lightning}
Let
\begin{align*}
N^n := \sum_{z \in S^n_{\tau_n, x}} I^n_{\tau_n, z}
\end{align*}
be the number of lightnings in $S^n_{\tau_n, x}$ up to time $\tau_n$. Then we have
\begin{align*}
N^n \stackrel{\mf{P}}{=} \mc{O} \left( f(n) \right) \qquad \text{for } n \rightarrow \infty \text{.}
\end{align*}
\end{lem}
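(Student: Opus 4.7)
The plan is to estimate $\mathbf{E}_{\mathbf{P}}[N^n]$ directly and then apply Markov's inequality, exploiting the fact that the ignition processes $(I^n_{t,z})_{t \geq 0, z \in B_n}$ are independent of the growth processes $(G_{t,z})_{t \geq 0, z \in T}$, and that $S^n_{\tau_n, x}$ is a deterministic function of the growth processes alone.

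More precisely, writing $N^n = \sum_{z \in B_n} 1_{\{z \in S^n_{\tau_n, x}\}} I^n_{\tau_n, z}$ and using independence together with $\mathbf{E}_{\mathbf{P}}[I^n_{\tau_n, z}] = \lambda(n) \tau_n$, I would obtain
\begin{align*}
\mathbf{E}_{\mathbf{P}}[N^n] = \lambda(n) \tau_n \, \mathbf{E}_{\mathbf{P}}\bigl[|S^n_{\tau_n, x}|\bigr] \text{.}
\end{align*}
The first moment of $|S^n_{\tau_n, x}|$ can be controlled by the same decomposition as in Step 2 of the proof of Proposition~\ref{prop replacement Kesten-Stigum}: distinguishing which ancestor $z \preceq x$ is the root of $S_{\tau_n, x}$ and using the fact that the rooted subtree from such a $z$ is isomorphic to $T$, one reduces to $\mathbf{E}_{\mathbf{P}}[|S^{n - |z|}_{\tau_n, \emptyset}|]$ and then applies (\ref{lem Galton Watson moments eq 1st moment}) from Lemma~\ref{lem Galton Watson moments}. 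This gives, for all $n$ large enough that $\tau_n > t_c$,
\begin{align*}
\mathbf{E}_{\mathbf{P}}\bigl[|S^n_{\tau_n, x}|\bigr] \leq (|x| + 1) \frac{m(\tau_n)}{m(\tau_n) - 1} m(\tau_n)^n \text{.}
\end{align*}

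Substituting this bound and using the defining identity $\lambda(n) m(\tau_n)^n = f(n)$ of $\tau_n$, I obtain for every $c > 0$
\begin{align*}
\mathbf{P}\bigl[ N^n \geq c f(n) \bigr]
\leq \frac{\mathbf{E}_{\mathbf{P}}[N^n]}{c f(n)}
\leq \frac{\tau_n (|x| + 1)}{c} \frac{m(\tau_n)}{m(\tau_n) - 1}
\end{align*}
by Markov's inequality. Since $\tau_n \to \tau$ by (\ref{eq tau_n to tau}) and $m(\tau) > 1$, the right-hand side converges as $n \to \infty$ to a constant multiple of $1/c$, which tends to $0$ as $c \to \infty$. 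This is precisely the definition of $N^n \stackrel{\mathbf{P}}{=} \mathcal{O}(f(n))$.

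There is no genuine obstacle here; the entire argument is a first-moment computation. The only point requiring some care is to notice that $S^n_{\tau_n, x}$ depends only on the growth processes so that the $I^n_{\tau_n, z}$ can be pulled out of the expectation, and to handle the fact that $x$ need not be the root of its cluster by the ancestor decomposition already used in the proof of Proposition~\ref{prop replacement Kesten-Stigum}.
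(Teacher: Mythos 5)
Your argument is correct, and at its core it is the same first-moment/Markov computation as in the paper, but organized differently. The paper never computes $\mf{E}_{\mf{P}}\bigl[|S^n_{\tau_n, x}|\bigr]$: it conditions on $S^n_{\tau_n, x}$, uses $\mf{E}_{\mf{P}}\bigl[N^n \mid S^n_{\tau_n, x}\bigr] = \tau_n \lambda(n) |S^n_{\tau_n, x}|$, and intersects with the high-probability event $\mb{E}_{n, \tilde c} = \{|S^n_{\tau_n, x}| \leq \tilde c\, m(\tau_n)^n\}$ supplied by (\ref{prop replacement Kesten-Stigum eq upper bound}) of Proposition \ref{prop replacement Kesten-Stigum}, which yields $\mf{P}\bigl[N^n > cf(n), \mb{E}_{n, \tilde c}\bigr] \leq \tau_n \tilde c / c$ and then the double limit $n \rightarrow \infty$, $c \rightarrow \infty$. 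You instead bound the unconditional expectation $\mf{E}_{\mf{P}}[N^n] = \tau_n \lambda(n) \mf{E}_{\mf{P}}\bigl[|S^n_{\tau_n, x}|\bigr]$ (valid, since the ignition processes are independent of the growth processes and $S^n_{\tau_n, x}$ is a function of the latter) and control the cluster-size expectation by the ancestor decomposition together with (\ref{lem Galton Watson moments eq 1st moment}); this is legitimate, since for $n$ large enough that $n \geq |x|$ and $\tau_n > t_c$ (guaranteed by (\ref{eq tau_n to tau})) it gives $\mf{E}_{\mf{P}}\bigl[|S^n_{\tau_n, x}|\bigr] \leq (|x| + 1) \frac{m(\tau_n)}{m(\tau_n) - 1} m(\tau_n)^n$, and combined with $\lambda(n) m(\tau_n)^n = f(n)$ and Markov's inequality you get $\mf{P}\bigl[N^n \geq c f(n)\bigr] \leq \frac{(|x| + 1)\tau_n}{c} \cdot \frac{m(\tau_n)}{m(\tau_n) - 1}$, which indeed verifies the definition of $N^n \stackrel{\mf{P}}{=} \mc{O}(f(n))$. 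What your route buys: Proposition \ref{prop replacement Kesten-Stigum} is not needed for this lemma at all, only Lemma \ref{lem Galton Watson moments}, and the bound is explicit and holds for every large $n$ rather than only through a $\limsup$. What the paper's truncation buys: the same event $\mb{E}_{n, \tilde c}$ and the same bookkeeping are reused essentially verbatim in Lemmas \ref{lem distance lightning} and \ref{lem depth lightning}, so all control of cluster sizes (including for a general vertex $x$, not just the root) is concentrated in Proposition \ref{prop replacement Kesten-Stigum} instead of being redone via moments in each lemma.
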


\begin{proof}
Let $c, \tilde c > 0$, $n \in \mb{N}$ and let $\mb{E}_{n, \tilde c}$ be defined as in (\ref{lem distance lightning eq E}). By Proposition \ref{prop replacement Kesten-Stigum}, equation (\ref{prop replacement Kesten-Stigum eq upper bound}), it suffices to show
\begin{align} \label{lem number lightning eq essence}
\forall \tilde  c > 0: \lim_{c \rightarrow \infty} \limsup_{n \rightarrow \infty} \mf{P} \left[ N^n > cf(n), \mb{E}_{n, \tilde c} \right] = 0 \text{.}
\end{align}

Indeed, we have
\begin{align*}
\mf{P} \left[ N^n > cf(n), \mb{E}_{n, \tilde c} \right]
&\leq \frac{1}{cf(n)} \, \mf{E}_{\mf{P}} \left[ N^n 1_{\mb{E}_{n, \tilde c}} \right] \\
&= \frac{1}{cf(n)} \, \mf{E}_{\mf{P}} \left[ \mf{E}_{\mf{P}} \left[ N^n \left| S^n_{\tau_n, x} \right. \right] 1_{\mb{E}_{n, \tilde c}} \right] \\
&= \frac{1}{cf(n)} \, \mf{E}_{\mf{Q}_n} \left[ \tau_n \lambda(n) |S^n_{\tau_n, x}| 1_{\mb{E}_{n, \tilde c}} \right] \\
&\leq \frac{\tau_n \tilde c}{c}
\end{align*}
and (by (\ref{eq tau_n to tau}))
\begin{align*}
\lim_{c \rightarrow \infty} \lim_{n \rightarrow \infty} \frac{\tau_n \tilde c}{c}
= \lim_{c \rightarrow \infty} \frac{\tau \tilde c}{c}
= 0 \text{,}
\end{align*}
which proves (\ref{lem number lightning eq essence}).
\end{proof}

\begin{lem} \label{lem depth lightning}
Let
\begin{align*}
K^n := \max \left\{ k \in \{0, \ldots, n\}: \left( \exists z \in S^n_{\tau_n, x} \cap T_{n - k}: I^n_{\tau_n, z} > 0 \right) \right\} \vee (-1)
\end{align*}
be the ``depth'' of lightning in $S^n_{\tau_n, x}$ up to time $\tau_n$. (On the event $\{ \forall z \in S^n_{\tau_n, x}: I^n_{\tau_n, z} = 0 \}$ we have $K^n = -1$ by definition.) Then we have
\begin{align*}
K^n \stackrel{\mf{P}}{=} \mc{O} \left( \log n \right) \qquad \text{for } n \rightarrow \infty \text{.}
\end{align*}
\end{lem}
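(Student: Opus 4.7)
\textbf{Proof plan for Lemma \ref{lem depth lightning}.} The plan is to bound $\mf{P}[K^n > c \log n]$ by a union bound over the ``deep'' generations $n-k$, for $k > c \log n$, and then to show that with $c$ large enough the resulting sum vanishes as $n \to \infty$. By Markov's inequality applied to the Poisson count of lightnings,
\begin{align*}
\mf{P} \left[ \exists z \in S^n_{\tau_n, x} \cap T_{n-k}: I^n_{\tau_n, z} > 0 \right]
\leq \tau_n \, \lambda(n) \, \mf{E}_{\mf{P}} \left[ |S_{\tau_n, x} \cap T_{n-k}| \right],
\end{align*}
since, conditionally on the cluster, the number of lightnings in $S^n_{\tau_n, x} \cap T_{n-k}$ up to time $\tau_n$ is Poisson with parameter $\tau_n \lambda(n) |S_{\tau_n, x} \cap T_{n-k}|$.

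Next I would bound the expectation $\mf{E}_{\mf{P}}[|S_{\tau_n, x} \cap T_{n-k}|]$ by means of the Galton-Watson description used in the proof of Lemma \ref{lem Galton Watson moments}. Decomposing over the possible root $z \preceq x$ of the cluster of $x$ and using that, at time $t$, the number of vertices at distance $j$ in the subtree $\hat{T}_z$ rooted at $z$ has mean $m(t)^j$, one obtains a bound of the form $\mf{E}_{\mf{P}}[|S_{\tau_n, x} \cap T_{n-k}|] \leq C_x \, m(\tau_n)^{n-k}$ for $k \leq n - |x|$, with a constant $C_x$ depending only on $|x|$. Inserting this and using $\lambda(n) m(\tau_n)^n = f(n)$, the union bound gives
\begin{align*}
\mf{P} \left[ K^n > c \log n \right]
\leq \sum_{k = \lceil c \log n \rceil}^{n} \tau_n \, \lambda(n) \, C_x \, m(\tau_n)^{n-k}
= C_x \, \tau_n \, f(n) \sum_{k = \lceil c \log n \rceil}^{n} m(\tau_n)^{-k}.
\end{align*}

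Summing the geometric series and using $\tau_n \to \tau$, $m(\tau_n) \to m(\tau) > 1$, the right-hand side is asymptotically at most $C' f(n) \, n^{-c \log m(\tau)}$ for some constant $C' > 0$. Since the hypothesis $f(n) \ll n / \log n$ in particular yields $f(n) \leq n$ for $n$ large, for any $c > 1/\log m(\tau)$ this bound tends to $0$ as $n \to \infty$. Hence $\lim_{c \to \infty} \limsup_{n \to \infty} \mf{P}[K^n > c \log n] = 0$, which is exactly $K^n \stackrel{\mf{P}}{=} \mathcal{O}(\log n)$.

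The only real obstacle is getting the first-moment bound for $|S_{\tau_n, x} \cap T_{n-k}|$ uniform in $n$ and $k$, including the handling of the unknown root of the cluster of $x$; but this is entirely parallel to Step 2 in the proof of Proposition \ref{prop replacement Kesten-Stigum} and to the first-moment argument of Lemma \ref{lem Galton Watson moments}, so it is routine. Everything else is a geometric sum and a comparison between $f(n)$ and $n$.
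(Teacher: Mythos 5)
Your argument is correct, and it reaches the same quantitative threshold as the paper ($c > 1/\log m(\tau)$, via $f(n) \leq n$ against $m(\tau_n)^{c\log n}$), but it is packaged differently. The paper does not use a union bound over generations: it lumps all generations of depth greater than $c\log n$ into the single truncated cluster $S^{\,n-\lfloor c\log n\rfloor-1}_{\tau_n,x}$, restricts to the high-probability event $\bigl\{|S^{\,n-\lfloor c\log n\rfloor-1}_{\tau_n,x}|\leq \tilde c\, m(\tau_n)^{\,n-\lfloor c\log n\rfloor-1}\bigr\}$ supplied by the uniform upper bound (\ref{prop replacement Kesten-Stigum eq upper bound}) of Proposition~\ref{prop replacement Kesten-Stigum}, and then computes the conditional probability $1-\exp\bigl(-\tau_n\lambda(n)|S^{\,n-\lfloor c\log n\rfloor-1}_{\tau_n,x}|\bigr)$ exactly, using the independence of growth and ignition. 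You instead run a bare first-moment argument: a generation-by-generation union bound, $\mf{P}[\exists z\in S^n_{\tau_n,x}\cap T_{n-k}: I^n_{\tau_n,z}>0]\leq \tau_n\lambda(n)\,\mf{E}_{\mf{P}}[|S_{\tau_n,x}\cap T_{n-k}|]$, plus the annealed bound $\mf{E}_{\mf{P}}[|S_{\tau_n,x}\cap T_{n-k}|]\leq C_x\, m(\tau_n)^{n-k}$ and a geometric series. This buys you independence from Proposition~\ref{prop replacement Kesten-Stigum} (no bad-event bookkeeping at all), at the cost of having to justify the expectation bound yourself; that bound is indeed routine (decompose over the cluster root $z\preceq x$ as in Step~2 of Proposition~\ref{prop replacement Kesten-Stigum}, note each $\hat S_{\tau_n,z}\cap T_{n-k}$ has mean at most $m(\tau_n)^{n-k-|z|}$, and use $m(\tau_n)\geq 1$ for large $n$, which follows from $\tau_n\to\tau>t_c$). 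Two harmless imprecisions: your exponent should be $n^{-c\log m(\tau_n)}$ rather than $n^{-c\log m(\tau)}$ (fixed by $m(\tau_n)\to m(\tau)>1$ and taking $c$ with a little slack), and your conclusion is in fact stronger than required, since you get $\limsup_n\mf{P}[K^n>c\log n]=0$ for every fixed $c>1/\log m(\tau)$, which certainly implies $K^n\stackrel{\mf{P}}{=}\mc{O}(\log n)$ in the paper's sense.
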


\begin{proof}
Let $c, \tilde c > 0$, $n \in \mb{N}$ with $n \geq \lfloor c \log n \rfloor + 2$ and let
\begin{align*}
\mb{E}_{n, c, \tilde c} := \left\{ |S^{n - \lfloor c \log n \rfloor - 1}_{\tau_n, x}| \leq \tilde c m(\tau_n)^{n - \lfloor c \log n \rfloor - 1} \right\} \text{.}
\end{align*}
By Proposition \ref{prop replacement Kesten-Stigum}, equation (\ref{prop replacement Kesten-Stigum eq upper bound}), it suffices to show
\begin{align} \label{lem depth lightning eq essence}
\forall \tilde  c > 0: \lim_{c \rightarrow \infty} \limsup_{n \rightarrow \infty} \mf{P} \left[ K^n > c \log n, \mb{E}_{n, c, \tilde c} \right] = 0 \text{.}
\end{align}

Indeed, we have
\begin{align*}
\mf{P} \left[ K^n > c \log n, \mb{E}_{n, c, \tilde c} \right]
&= \mf{E}_{\mf{P}} \left[ \mf{P} \left[ \left. \exists z \in S^{n - \lfloor c \log n \rfloor - 1}_{\tau_n, x}: I^n_{\tau_n, z} > 0 \right| S^{n - \lfloor c \log n \rfloor - 1}_{\tau_n, x} \right] 1_{\mb{E}_{n, c, \tilde c}} \right] \\
&= \mf{E}_{\mf{P}} \left[ \left( 1 - \exp \left( -\tau_n \lambda(n) |S^{n - \lfloor c \log n \rfloor - 1}_{\tau_n, x}| \right) \right) 1_{\mb{E}_{n, c, \tilde c}} \right] \\
&\leq 1 - \exp \left( -\tilde c \tau_n \frac{f(n)}{m(\tau_n)^{\lfloor c \log n \rfloor + 1}} \right) \text{.}
\end{align*}
Equations (\ref{eq condition on f}) and (\ref{eq tau_n to tau}) imply that for $n$ large enough $f(n) \leq n$ and $m(\tau_n) > 1$ hold and hence
\begin{align*}
\tau_n \frac{f(n)}{m(\tau_n)^{\lfloor c \log n \rfloor + 1}}
\leq \tau_n \frac{n}{m(\tau_n)^{c \log n}}
= \tau_n \exp \left( (\log n) (1 - c \log m(\tau_n)) \right) \text{.}
\end{align*}
By (\ref{eq tau_n to tau}), for $c > 1/\log m(\tau)$ we thus obtain
\begin{align*}
\lim_{n \rightarrow \infty} \tau_n \frac{f(n)}{m(\tau_n)^{\lfloor c \log n \rfloor + 1}} = 0 \text{,}
\end{align*}
which proves (\ref{lem depth lightning eq essence}).
\end{proof}

\begin{lem} \label{lem depth destruction}
Let
\begin{align*}
J^n := \max \left\{ j \in \{0, \ldots, n\}: \left( \exists z \in S^n_{\tau_n, x} \cap T_{n - j} \, \exists t \in (0, \tau_n]: \eta^n_{t^-, z} > \eta^n_{t, z} \right) \right\} \vee (-1)
\end{align*}
be the ``depth'' of destruction in $S^n_{\tau_n, x}$ up to time $\tau_n$. (On the event $\{ \forall z \in S^n_{\tau_n, x} \, \forall t \in (0, \tau_n]: \eta^n_{t^-, z} \leq \eta^n_{t, z} \}$ we have $J^n = -1$ by definition.) Then we have
\begin{align*}
J^n \stackrel{\mf{P}}{=} \mc{O} \left( f(n) \log n \right) \qquad \text{for } n \rightarrow \infty \text{.}
\end{align*}
\end{lem}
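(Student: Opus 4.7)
I will adapt the template of Lemmas~\ref{lem distance lightning}--\ref{lem depth lightning}: first reduce to the good event $\{K^n \le c_K\log n\}$ from Lemma~\ref{lem depth lightning}, and then bound $\mf{P}[J^n > cf(n)\log n,\, K^n \le c_K\log n]$ by a first-moment estimate over pairs $(u, v)$, where $u$ is the deepest destroyed vertex in $S^n_{\tau_n, x}$ and $v\in S^n_{\tau_n, x}$ is the strike vertex that caused its destruction.

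The key geometric observation is that on the good event, if $u$ at generation $n-j$ with $j>K^n$ is destroyed at some time $t\in(0,\tau_n]$ by a lightning at $v$, then $u$ must be an ancestor of $v$ (else the lowest common ancestor of $u$ and $v$ would be an even deeper destroyed vertex, contradicting maximality), and $v$ lies at generation $\ge n-c_K\log n$. Write $h:=|u|$ and $k:=|v|-|u|\in[0,c_K\log n]$. For the destruction to occur, the path from $v$ up to $u$ must be occupied in the forest-fire configuration at $t^-$ (and hence in the pure-growth configuration at $t^-$, contributing $(1-e^{-t})^{k+1}$), while the disjoint path from $u$ back to $x$ must be occupied in pure growth at $\tau_n$ (contributing $(1-e^{-\tau_n})^h$ up to constants). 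Integrating the Poisson strike rate $\lambda(n)$ against $t\in[0,\tau_n]$ via
\begin{align*}
\int_0^{\tau_n}(1-e^{-t})^{k+1}\,dt \;\le\; \frac{e^{\tau_n}(1-e^{-\tau_n})^{k+2}}{k+1}
\end{align*}
and summing the $r^{h+k}$ choices of ancestor-descendant pairs over $h\in[0,n-J-1]$ and $k\in[0,c_K\log n]$, where $J=\lfloor cf(n)\log n\rfloor$, the expected number of such triples is bounded by
\begin{align*}
C\,\lambda(n)\sum_{h=0}^{n-J-1}\sum_{k=0}^{c_K\log n}\frac{r^{h+k}(1-e^{-\tau_n})^{h+k+2}}{k+1}
\;\le\; C'\,f(n)\,m(\tau_n)^{c_K\log n - J},
\end{align*}
using $r(1-e^{-\tau_n})=m(\tau_n)$, $\lambda(n)\,m(\tau_n)^n=f(n)$, and the fact that the double sum is dominated by its corner term. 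For $c$ large the factor $m(\tau_n)^{(c_K-cf(n))\log n}=n^{(c_K-cf(n))\log m(\tau_n)}$ decays faster than any polynomial in $n$, so Markov's inequality yields the claim.

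The main obstacle is arranging the cancellation between the exponential growth $r^{h+k}$ from the count of $(u,v)$ pairs and the geometric decay $(1-e^{-\tau_n})^{h+k+2}$ of the joint occupation probability. This cancellation is tight: it needs both the sharp $1/(k+1)$ gain in the time integral (the trivial bound $\tau_n$ would lose this factor) and the hard cap $k\le c_K\log n$ from Lemma~\ref{lem depth lightning} (otherwise one sums up to $k=n$ and picks up an extra factor of $m(\tau_n)^n$). For general $x\in T$, a root-of-cluster decomposition as in Step~2 of the proof of Proposition~\ref{prop replacement Kesten-Stigum} handles the various possible shapes of the path from $x$ to $u$.
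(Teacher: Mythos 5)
There is a genuine gap in the index bookkeeping, and it is not cosmetic. Lemma~\ref{lem depth lightning} controls the depth $n-|v|$ of the strike vertex, i.e.\ on your good event $|v|\ge n-c_K\log n$; it says nothing about the distance $|v|-|u|$. With $h=|u|$, $k=|v|-|u|$, the event $\{J^n>cf(n)\log n,\ K^n\le c_K\log n\}$ forces $h\le n-J-1$ and $h+k\ge n-c_K\log n$, hence $k\ge cf(n)\log n-c_K\log n$, far outside your claimed range $k\in[0,c_K\log n]$. So the double sum you estimate runs over pairs that cannot occur on the good event (their strike vertex would sit at depth about $cf(n)\log n\gg c_K\log n$), and the superpolynomially small factor $m(\tau_n)^{c_K\log n-J}$ is an artifact of this mis-transcription. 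If one corrects the range to $h\le n-J-1$, $h+k\in[n-c_K\log n,n]$, the same first moment over \emph{all} ancestor--descendant pairs becomes, up to constants, $\lambda(n)\sum_{h,k} m(\tau_n)^{h+k}/(k+1)\asymp f(n)\log\frac{n}{cf(n)\log n}$, which diverges for every admissible $f$ and every fixed $c$: the events ``the path above $v$ is occupied up to height $k$ at the strike time'' are nested in $k$, so the union over all ancestors $u$ is hopelessly lossy. As written, the argument therefore does not prove the lemma.

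The strategy is salvageable, but the counting must change: on the good event the path from the strike vertex $v$ up to generation $n-J-1$ is occupied in pure growth at the strike time, so it suffices to run the first moment over strike vertices alone, testing only the \emph{single} ancestor of $v$ at generation $n-J-1$ (distance $d\ge J+1-c_K\log n$ above $v$). Your time integral then yields the factor $1/d$, and the expected count is of order $f(n)/d\asymp 1/(c\log n)\to 0$; one must also verify via the coupling that the strike vertex and the relevant common ancestor really lie in $S^n_{\tau_n,x}$, which you use implicitly in the maximality argument. Note that this corrected route is genuinely different from the paper's proof, which uses all three of Lemmas~\ref{lem distance lightning}, \ref{lem number lightning} and \ref{lem depth lightning}: there one conditions on the time-$\tau_n$ configuration, replaces your time integral by the time buffer $\iota^n\ge 1/(\tilde cf(n))$ together with a thinning estimate, bounding the conditional probability that a struck cluster already reached up $cf(n)\log n$ generations at time $\tau_n-1/(\tilde cf(n))$ by $\bigl((1-e^{-\tau_n+1/(\tilde cf(n))})/(1-e^{-\tau_n})\bigr)^{cf(n)\log n}\approx n^{-c/(\tilde c(e^{\tau}-1))}$, and concludes with a union bound over the at most $\tilde cf(n)$ strikes.
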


\begin{proof}
Let $c, \tilde c > 0$, $n \in \mb{N}$ and let
\begin{align*}
\mb{F}_{n, \tilde c} := \left\{ \iota^n \geq \frac{1}{\tilde c f(n)}, N^n \leq \tilde c f(n), K^n \leq \tilde c \log n \right\} \text{.}
\end{align*}
By Lemmas \ref{lem distance lightning}, \ref{lem number lightning} and \ref{lem depth lightning}, it suffices to show
\begin{align} \label{lem depth destruction eq essence}
\forall \tilde c > 0: \lim_{c \rightarrow \infty} \limsup_{n \rightarrow \infty} \mf{Q}_n \left[ J^n > c f(n) \log n, \mb{F}_{n, \tilde c} \right] = 0 \text{.}
\end{align}

Let $(Z_i^n)_{i = 1, \ldots, N^n}$ be an enumeration of the sites in $S^n_{\tau_n, x}$ which are hit by ignition up to time $\tau_n$, where we count these sites with multiplicity, i.e.\ for each $z \in S^n_{\tau_n, x}$ the relation $\left| \left\{ i \in \{1, \ldots, N^n\}: Z_i^n = z \right\} \right| = I^n_{\tau_n, z}$ holds. (On the event $\{N^n = 0\}$ the sequence $(Z_i^n)_{i = 1, \ldots, N^n}$ is empty.) For $t \geq 0$ and $z \in T$ let
\begin{align*}
A_{t, z} := |z| - \min \left\{ |w|: w \in S_{t, z} \right\}
\end{align*}
be the difference between the generation of $z$ and the lowest generation which is contained in the cluster of $z$ in the pure growth process at time $t$. (On the event $\{S_{t, z} = \emptyset\}$ we have $A_{t, z} = -\infty$.) Now suppose that $0 \leq t_1 \leq t_2$, $z \in T$ and $k \in \mb{N}$ are given: Using the inclusion $\{A_{t_1, z} \geq k\} \subset \{A_{t_2, z} \geq k\}$ and the fact that the growth processes at different sites are independent, one can show that
\begin{align*}
\mf{P} \left[ A_{t_1, z} \geq k \left| (\sigma_{t_2, w})_{w \in T} \right. \right]
= \left( \frac{1 - e^{-t_1}}{1 - e^{-t_2}} \right)^{k + 1} 1_{\{A_{t_2, z} \geq k\}} \text{.}
\end{align*}
Additionally, let $\mc{S}_{t_2} \mc{I}^n := \sigma \left( (\sigma_{t_2, w})_{w \in T}, (I^n_{t, w})_{t \geq 0, w \in B_n} \right)$ denote the $\sigma$-field generated by the configuration of the pure growth process at time $t_2$ and all ignition processes. Since the growth processes and the ignition processes are independent and since $A_{t_1, z}$ only depends on the growth processes, it follows from the previous equation that
\begin{align} \label{lem depth destruction eq conditional prob}
\mf{P} \left[ A_{t_1, z} \geq k \left| \mc{S}_{t_2} \mc{I}^n \right. \right]
= \left( \frac{1 - e^{-t_1}}{1 - e^{-t_2}} \right)^{k + 1} 1_{\{A_{t_2, z} \geq k\}}
\leq \left( \frac{1 - e^{-t_1}}{1 - e^{-t_2}} \right)^{k + 1} \text{.}
\end{align}

We now relate these preliminaries with the proof of (\ref{lem depth destruction eq essence}): Assume that $n$ is large enough so that $c f(n) \log n \geq \tilde c \log n$ and $\tau_n \geq 1/(\tilde c f(n))$ hold. Then
\begin{align}
\left\{ J^n > c f(n) \log n, \mb{F}_{n, \tilde c} \right\}
&\subset \left\{ \exists i \in \{1, \ldots, N^n\}: (n - |Z^n_i|) +  A_{\tau_n - 1/(\tilde c f(n)), Z^n_i} > c f(n) \log n, \mb{F}_{n, \tilde c} \right\} \nonumber \\
&\subset \left\{ \exists i \in \{1, \ldots, N^n\}: A_{\tau_n - 1/(\tilde c f(n)), Z^n_i} > c f(n) \log n - \tilde c \log n, \mb{F}_{n, \tilde c} \right\} \label{lem depth destruction eq relation A and Z}
\end{align}
holds, where the first inclusion uses $\mb{F}_{n, \tilde c} \subset \left\{ \iota^n \geq 1/(\tilde cf(n)) \right\}$ and the second inclusion is due to the fact that $\mb{F}_{n, \tilde c} \subset \left\{ \forall i \in \{1, \ldots, N^n\}: (n - |Z^n_i|) \leq \tilde c \log n \right\}$. Furthermore, we deduce from (\ref{lem depth destruction eq relation A and Z}) and (\ref{lem depth destruction eq conditional prob}) that
\begin{align*}
&\mf{P} \left[ J^n > c f(n) \log n, \mb{F}_{n, \tilde c} \right] \\
&\hspace{1cm}\leq \mf{E}_{\mf{P}} \left[ \sum_{i = 1}^{N^n} 1_{\{ A_{\tau_n - 1/(\tilde c f(n)), Z^n_i} > c f(n) \log n - \tilde c \log n \}} 1_{\mb{F}_{n, \tilde c}} \right] \\
&\hspace{1cm}= \mf{E}_{\mf{P}} \left[ \sum_{i = 1}^{N^n} \sum_{z \in S^n_{\tau_n, x}} \mf{P} \left[ \left. A_{\tau_n - 1/(\tilde c f(n)), z} \geq \lfloor c f(n) \log n - \tilde c \log n \rfloor + 1 \right| \mc{S}_{\tau_n} \mc{I}^n \right] 1_{\{Z^n_i = z\}} 1_{\mb{F}_{n, \tilde c}} \right] \\
&\hspace{1cm}\leq \tilde c f(n) \left( \frac{1 - e^{-\tau_n + 1/(\tilde c f(n))}}{1 - e^{-\tau_n}} \right)^{\lfloor c f(n) \log n - \tilde c \log n \rfloor + 2}
\end{align*}
holds. Now let $n$ be large enough so that $f(n) \leq n$ holds (which is possible by (\ref{eq condition on f})). Then
\begin{align*}
\tilde c f(n) \left( \frac{1 - e^{-\tau_n + 1/(\tilde c f(n))}}{1 - e^{-\tau_n}} \right)^{\lfloor c f(n) \log n - \tilde c \log n \rfloor + 2}
\leq \tilde cn \left( \frac{1 - e^{-\tau_n + 1/(\tilde c f(n))}}{1 - e^{-\tau_n}} \right)^{c f(n) \log n - \tilde c \log n} \text{.}
\end{align*}
In order to determine the behaviour of the last term for $n \rightarrow \infty$, we rewrite it as
\begin{align}
&n \left( \frac{1 - e^{-\tau_n + 1/(\tilde c f(n))}}{1 - e^{-\tau_n}} \right)^{c f(n) \log n - \tilde c \log n} \nonumber \\
&\hspace{1cm}= \exp \left( \log n + \left( c f(n) \log n - \tilde c \log n \right) \log \left( 1 - \frac{e^{1/(\tilde c f(n))} - 1}{e^{\tau_n} - 1} \right) \right) \nonumber \\
&\hspace{1cm}= \exp \left( (\log n) \left( 1 + \left( c - \frac{\tilde c}{f(n)} \right) f(n) \log \left( 1 - \frac{e^{1/(\tilde c f(n))} - 1}{e^{\tau_n} - 1} \right) \right) \right) \text{.} \label{lem depth destruction eq rewritten expression}
\end{align}
Since $f(n) \rightarrow \infty$ and $\tau_n \rightarrow \tau$ for $n \rightarrow \infty$ (see (\ref{eq condition on f}) and (\ref{eq tau_n to tau})), we calculate
\begin{align*}
\lim_{n \rightarrow \infty} \frac{e^{\tau_n} - 1}{e^{1/(\tilde c f(n))} - 1} \log \left( 1 - \frac{e^{1/(\tilde c f(n))} - 1}{e^{\tau_n} - 1} \right)
&= \lim_{y \downarrow 0} \frac{\log \left( 1 - y \right)}{y} = -1 \text{,} \\
\lim_{n \rightarrow \infty} \tilde cf(n) \left( e^{1/(\tilde c f(n))} - 1 \right)
&= \lim_{y \downarrow 0} \frac{e^y - 1}{y} = 1 \text{,} \\
\lim_{n \rightarrow \infty} \frac{1}{\tilde c \left( e^{\tau_n} - 1 \right)}
&= \frac{1}{\tilde c \left( e^{\tau} - 1 \right)} \text{.}
\end{align*}
Multiplying these equations yields
\begin{align} \label{lem depth destruction eq crucial limit}
\lim_{n \rightarrow \infty} f(n) \log \left( 1 - \frac{e^{1/(\tilde c f(n))} - 1}{e^{\tau_n} - 1} \right)
= \frac{-1}{\tilde c \left( e^{\tau} - 1 \right)} \text{.}
\end{align}
From (\ref{lem depth destruction eq rewritten expression}) and (\ref{lem depth destruction eq crucial limit}) we conclude that for $c > \tilde c \left( e^{\tau} - 1 \right)$ we have
\begin{align*}
\lim_{n \rightarrow \infty} \tilde cn \left( \frac{1 - e^{-\tau_n + 1/(\tilde c f(n))}}{1 - e^{-\tau_n}} \right)^{c f(n) \log n - \tilde c \log n} = 0 \text{,}
\end{align*}
which proves (\ref{lem depth destruction eq essence}).
\end{proof}

\begin{proof}[Proof of (\ref{eq thm general convergence part 1}) and (\ref{eq thm general convergence part 2}) (and hence of Theorem \ref{thm general convergence})]
Equation (\ref{eq thm general convergence part 1}) is an immediate consequence of Lemma \ref{lem depth destruction} and the fact that $f(n) \log n \ll n$ for $n \rightarrow \infty$ by (\ref{eq condition on f}).

Proof of (\ref{eq thm general convergence part 2}): Let $c > 0$, $n \in \mb{N}$ and let
\begin{align*}
\mb{G}_{n, c} := \left\{ \frac{|S^n_{\tau_n, x}|}{m(\tau_n)^n} \geq \frac{1}{c}, N^n \leq cf(n), J^n \leq c f(n) \log n \right\} \text{.}
\end{align*}
By Proposition \ref{prop replacement Kesten-Stigum}, equation (\ref{prop replacement Kesten-Stigum eq lower bound}), Lemma \ref{lem number lightning} und Lemma \ref{lem depth destruction}, it suffices to show
\begin{align} \label{eq thm general convergence part 2 essence}
\forall c > 0: \lim_{n \rightarrow \infty} \mf{Q}_n \left[ \left. \exists t \in (\tau_n, \tau_n + \delta): \eta^n_{t^-, x} > \eta^n_{t, x} \right| \mb{G}_{n, c} \right] = 1 \text{.}
\end{align}

We first observe that
\begin{align} \label{eq difference Sn and Cn}
\left| S^n_{\tau_n, x} \setminus C^n_{\tau_n, x} \right|
\leq N^n \frac{r^{J^n + 1} - 1}{r - 1}
\end{align}
holds: In the case where $J^n = -1$ we have $C^n_{\tau_n, x} = S^n_{\tau_n, x}$ so that (\ref{eq difference Sn and Cn}) holds indeed. In the case where $J^n \geq 0$ the cluster $C^n_{\tau_n, x}$ can only differ from $S^n_{\tau_n, x}$ in the maximal subtrees of $S^n_{\tau_n, x}$ whose roots are in $T_{n - J^n}$. Each of these maximal subtrees can have at most $\sum_{j = 0}^{J^n} r^j = \frac{r^{J^n + 1} - 1}{r - 1}$ vertices. Moreover, since these subtrees are disconnected, at most $N^n$ of them can have been affected by destruction up to time $\tau_n$. This proves (\ref{eq difference Sn and Cn}) in the second case. On the event $\mb{G}_{n, c}$, we hence have
\begin{align*}
\left| S^n_{\tau_n, x} \setminus C^n_{\tau_n, x} \right|
&\leq cf(n) \frac{r^{c f(n) \log n + 1} - 1}{r - 1} \\
&\leq cf(n) r^{c f(n) \log n + 1}
\end{align*}
and
\begin{align}
\left| C^n_{\tau_n, x} \right|
&= \left| S^n_{\tau_n, x} \right| - \left| S^n_{\tau_n, x} \setminus C^n_{\tau_n, x} \right| \nonumber \\
&\geq \frac{1}{c} m(\tau_n)^n - cf(n) r^{c f(n) \log n + 1} \text{.} \label{eq estimate Cn}
\end{align}
For $t \geq 0$, let $\mc{F}^n_t := \sigma((G_{s, w})_{0 \leq s \leq t, w \in T}, (I^n_{s, w})_{0 \leq s \leq t, w \in B_n})$ denote the $\sigma$-field generated by the growth and ignition processes up to time $t$. We then deduce
\begin{align}
&\mf{Q}_n \left[ \exists t \in (\tau_n, \tau_n + \delta): \eta^n_{t^-, x} > \eta^n_{t, x}, \mb{G}_{n, c} \right] \nonumber \\
&\hspace{1cm} \geq \mf{Q}_n \left[ \exists z \in C^n_{\tau_n, x}: I^n_{\tau_n + \delta, z} > I^n_{\tau_n, z}, \mb{G}_{n, c} \right] \nonumber \\
&\hspace{1cm} = \mf{E}_{\mf{Q}_n} \left[ \mf{Q}_n \left[ \left. \exists z \in C^n_{\tau_n, x}: I^n_{\tau_n + \delta, z} > I^n_{\tau_n, z} \right| \mc{F}^n_{\tau_n} \right] 1_{\mb{G}_{n, c}} \right] \nonumber \\
&\hspace{1cm} = \mf{E}_{\mf{Q}_n} \left[ \left( 1 - \exp \left( -\delta \lambda(n) \left| C^n_{\tau_n, x} \right| \right) \right) 1_{\mb{G}_{n, c}} \right] \nonumber \\
&\hspace{1cm} \geq \left( 1 - \exp \left( -\delta \lambda(n) \left( \frac{1}{c} m(\tau_n)^n - cf(n) r^{c f(n) \log n + 1} \right) \right) \right) \mf{Q}_n \left[ \mb{G}_{n, c} \right] \text{,} \label{eq estimate Gnc}
\end{align}
where the last inequality follows from (\ref{eq estimate Cn}). In order to determine the behaviour of the exponential argument for $n \rightarrow \infty$, we consider the two summands separately: For the first summand we clearly have
\begin{align} \label{eq limit first summand}
\lim_{n \rightarrow \infty} \lambda(n) m(\tau_n)^n
= \lim_{n \rightarrow \infty} f(n)
= \infty
\end{align}
(see (\ref{eq condition on f})). The second summand can be rewritten as
\begin{align*}
\lambda(n) f(n) r^{c f(n) \log n}
&= f(n)^2 \frac{r^{c f(n) \log n}}{m(\tau_n)^n} \\
&= \exp \left( 2 \log f(n) + (c \log r) f(n) \log n - (\log m(\tau_n)) n \right) \\
&= \exp \left( n \left( 2 \frac{\log f(n)}{n} + (c \log r) \frac{f(n) \log n}{n} - \log m(\tau_n) \right) \right) \text{.}
\end{align*}
By (\ref{eq condition on f}), the function $f$ satisfies $f(n) \ll n/\log n$ for $n \rightarrow \infty$, and this also implies $\log f(n) \ll n$ for $n \rightarrow \infty$. Using these asymptotics and recalling (\ref{eq tau_n to tau}), we thus conclude
\begin{align*}
\lim_{n \rightarrow \infty} \left( 2 \frac{\log f(n)}{n} + (c \log r) \frac{f(n) \log n}{n} - \log m(\tau_n) \right) = -\log m(\tau)
\end{align*}
and
\begin{align} \label{eq limit second summand}
\lim_{n \rightarrow \infty} \lambda(n) f(n) r^{c f(n) \log n} = 0 \text{.}
\end{align}
Putting (\ref{eq estimate Gnc}), (\ref{eq limit first summand}) and (\ref{eq limit second summand}) together yields the proof of (\ref{eq thm general convergence part 2 essence}).
\end{proof}

\section{Proof of Theorem~\ref{thm refined convergence}} \label{sec proof refined convergence}

Consider the same setup as in Section \ref{subsec proof general convergence}; additionally, let $g: \mb{N} \rightarrow (0, \infty)$ be defined as in (\ref{equation g}). By assumption, $g$ satisfies $\sqrt[n]{g(n)} \rightarrow 1$ for $n \rightarrow \infty$.

Part (i): Suppose that $g$ also satisfies $g(n) \ll n/\log n$ for $n \rightarrow \infty$. Then the function $f$ of Section~\ref{subsec proof general convergence} can be chosen in such a way that for $n$ large enough $f(n) \geq g(n)$ holds. Since $m^{-1}$ is monotone increasing, we conclude that for $n$ large enough
\begin{align*}
\tau_n
= m^{-1} \left( m(\tau) \sqrt[n]{\frac{f(n)}{g(n)}} \right)
\geq \tau
\end{align*}
holds. By (\ref{eq tau_n to tau}) we also have $\tau_n \rightarrow \tau$ for $n \rightarrow \infty$. Theorem~\ref{thm refined convergence} (i) therefore follows from (\ref{eq thm general convergence with tau_n}).

Part (ii): Suppose that there exists $\alpha \in (0, 1)$ such that $g$ satisfies $g(n) \gg \exp(n^{\alpha})$ for $n \rightarrow \infty$. Choose $\beta, \gamma \in (0, 1)$ such that $0 < \beta < \alpha$ and $0 < 1 - \beta < \gamma$ hold. Take the function $f$ of Section~\ref{subsec proof general convergence} to be $f(n) := n^{\gamma}$, $n \in \mb{N}$. Clearly, (\ref{eq condition on f}) is satisfied for this choice of $f$, and for $n$ large enough we have $f(n) \leq g(n)$. Hence, similar arguments as above show that for $n$ large enough $\tau_n \leq \tau$ holds. Again, by (\ref{eq tau_n to tau}) we also have $\tau_n \rightarrow \tau$ for $n \rightarrow \infty$. Using these facts and arguing analogously to Section~\ref{subsec proof general convergence}, we conclude that for Theorem~\ref{thm refined convergence} (ii) it suffices to prove
\begin{align} \label{eq thm refined convergence part 2}
\lim_{n \rightarrow \infty} \mf{Q}_n \left[ \exists t \in (\tau_n, \tau): \eta^n_{t^-, x} > \eta^n_{t, x} = 0 \right] = 1
\end{align}
for $x \in T$, where $\mf{Q}_n$ is defined as in Section~\ref{subsec proof general convergence}. In Section~\ref{subsec proof general convergence}, we deduced that equation (\ref{eq thm general convergence part 2}) follows from (\ref{eq limit first summand}) and (\ref{eq limit second summand}); in exactly the same way it can be shown that equation (\ref{eq thm refined convergence part 2}) follows from
\begin{align} \label{eq limit first summand refined}
\lim_{n \rightarrow \infty} \left( \tau - \tau_n \right) f(n)
= \infty
\end{align}
and
\begin{align} \label{eq limit second summand refined}
\lim_{n \rightarrow \infty} \left( \tau - \tau_n \right) \lambda(n) f(n) r^{c f(n) \log n} = 0 \text{.}
\end{align}
Now (\ref{eq limit second summand refined}) is an immediate consequence of (\ref{eq limit second summand}). It thus remains to prove (\ref{eq limit first summand refined}).

To this end we first rewrite $\tau - \tau_n$ as
\begin{align*}
\tau - \tau_n
= \log \left( \frac{r - m(\tau) \sqrt[n]{f(n)/g(n)}}{r - m(\tau)} \right)
= \log \left( 1 + \frac{m(\tau)}{r - m(\tau)} \left( 1 - \sqrt[n]{\frac{f(n)}{g(n)}} \right) \right) \text{.}
\end{align*}
Since $\sqrt[n]{f(n)} \rightarrow 1$ and $\sqrt[n]{g(n)} \rightarrow 1$ for $n \rightarrow \infty$ (the first limit follows from (\ref{eq condition on f}), the second limit holds by assumption), we conclude that
\begin{align*}
\lim_{n \rightarrow \infty} \left( 1 - \sqrt[n]{\frac{f(n)}{g(n)}} \right)^{-1} \left( \tau - \tau_n \right)
= \lim_{y \downarrow 0} y^{-1} \log \left( 1 + \frac{m(\tau)}{r - m(\tau)} y \right)
= \frac{m(\tau)}{r - m(\tau)} > 0
\end{align*}
holds. For $n$ large enough, we also have $\frac{f(n)}{g(n)} \leq \frac{n^{\gamma}}{\exp(n^{\alpha})} \leq \frac{1}{\exp(n^{\beta})}$ and hence
\begin{align*}
f(n) \left( 1 - \sqrt[n]{\frac{f(n)}{g(n)}} \right)
\geq n^{\gamma} \left( 1 - \exp \left( -n^{\beta - 1} \right) \right) \text{.}
\end{align*}
Moreover, the limit $n \rightarrow \infty$ of the last term is given by
\begin{align*}
\lim_{n \rightarrow \infty} n^{\gamma} \left( 1 - \exp \left( -n^{\beta - 1} \right) \right)
= \lim_{n \rightarrow \infty} n^{\gamma + \beta - 1} \cdot \lim_{n \rightarrow \infty} n^{-(\beta - 1)} \left( 1 - \exp \left( -n^{\beta - 1} \right) \right)
= \infty \text{.}
\end{align*}
This yields the proof of (\ref{eq limit first summand refined}).

\section{Proof of Proposition \ref{prop critical epsilon}} \label{sec proof critical epsilon}

Let $\tau \in (t_c, \infty)$, let $\epsilon > 0$ and let $(\rho_{t, z}, G_{t, z})_{0 \leq t \leq \tau + \epsilon, z \in T}$ be a self-destructive percolation process on $T$ with parameters $\tau$ and $\epsilon$ under some probability measure $\mf{P}$. So far we have parametrized self-destructive percolation in terms of the length of the time intervals $[0, \tau)$ and $[\tau, \tau + \epsilon]$. For the proof of Proposition \ref{prop critical epsilon}, however, it will be more convenient to parametrize the final configuration $\rho_{\tau + \epsilon} := (\rho_{\tau + \epsilon, z})_{z \in T}$ in terms of the Bernoulli probabilities $p := 1 - e^{-\tau}$ and $\delta := 1 - e^{-\epsilon}$ for growth at a fixed vertex in the time intervals $[0, \tau)$ and $[\tau, \tau + \epsilon]$, respectively. We therefore use the following alternative notation (which follows along the lines of \cite{berg2004destructive}):

Let $X_v$, $v \in T$, and $Y_v$, $v \in T$, be independent $\{0, 1\}$-valued random variables under some probability measure $\mf{P}_{p, \delta}$ such that
\begin{align*}
\mf{P}_{p, \delta} \left[ X_v = 1 \right] &= p \text{,} & \mf{P}_{p, \delta} \left[ X_v = 0 \right] &= 1 - p \text{,} \\
\mf{P}_{p, \delta} \left[ Y_v = 1 \right] &= \delta \text{,} & \mf{P}_{p, \delta} \left[ Y_v = 0 \right] &= 1 - \delta
\end{align*}
for all $v \in T$. Let $X := (X_v)_{v \in T}$, $Y := (Y_v)_{v \in T}$ and define $X^* = (X^*_v)_{v \in T}$, $Z = (Z_v)_{v \in T}$ by
\begin{align*}
X_v^* :=
\begin{cases}
1 & \text{if $X_v = 1$ and the cluster of $v$ in $X$ is finite,} \\
0 & \text{otherwise,}
\end{cases}
\end{align*}
and
\begin{align*}
Z_v := X_v^* \vee Y_v \text{.}
\end{align*}
Then the distribution of the final configuration $\rho_{\tau + \epsilon}$ under $\mf{P}$ and the distribution of $Z$ under $\mf{P}_{p, \delta}$ are clearly identical.

Let
\begin{align*}
\theta(p) := \mf{P}_{p, \delta} \left[ \text{the cluster of $\emptyset$ in $X$ is infinite} \right]
\end{align*}
be the probability that the root $\emptyset$ is in an infinite cluster after the first step of self-destructive percolation (i.e.\ in independent site percolation on $T$ with parameter $p$), and let
\begin{align*}
\theta(p, \delta) := \mf{P}_{p, \delta} \left[ \text{the cluster of $\emptyset$ in $Z$ is infinite} \right]
\end{align*}
be the probability that the root $\emptyset$ is in an infinite cluster in the final configuration of self-destructive percolation. Using the fact that the final configuration $Z$ is positively associated (\cite{berg2004destructive}, Sections 2.2 and 2.3), it is easy to see that the equivalence
\begin{align*}
\mf{P}_{p, \delta} \left[ \text{$Z$ contains an infinite cluster} \right] = 0
\Leftrightarrow \theta(p, \delta) = 0
\end{align*}
holds. For the proof of Proposition \ref{prop critical epsilon} it therefore suffices to prove the following proposition, where $p_c := \frac{1}{r} = 1 - e^{-t_c}$ denotes the critical probability of independent site percolation on $T$:

\begin{prop} \label{prop critical delta}
For all $p \in (p_c, 1)$ there exists $\delta \in (0, 1)$ such that $\theta(p, \delta) = 0$.
\end{prop}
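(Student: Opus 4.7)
My plan is to prove the stronger statement that $\mf{E}_{p,\delta}\bigl[|C_\emptyset(Z)|\bigr] < \infty$ for $\delta$ sufficiently small, where $C_\emptyset(Z)$ denotes the cluster of $\emptyset$ in $Z$; this forces $|C_\emptyset(Z)| < \infty$ $\mf{P}_{p,\delta}$-a.s.\ and hence $\theta(p,\delta) = 0$. By self-similarity of the $r$-regular rooted tree the expectation splits as $\sum_{k \geq 0} r^k \, \mf{P}_{p,\delta}[\text{a fixed length-}k\text{ downward path from $\emptyset$ lies entirely in $Z$}]$, so it suffices to bound the single-path probability at a geometric rate strictly below $1/r$.

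Fix a downward path $\emptyset = v_0, v_1, \ldots, v_k$ and decompose $\{Z_{v_i} = 1 \,\forall i\}$ according to the set $S := \{i : Y_{v_i} = 1\}$ of $Y$-indices:
\[\mf{P}_{p,\delta}[Z_{v_i} = 1 \,\forall i] = \sum_{S \subseteq \{0, \ldots, k\}} \delta^{|S|}(1-\delta)^{k+1-|S|} \cdot \mf{P}_p[X^*_{v_i} = 1 \,\forall i \in \overline S],\]
where $\overline S := \{0, \ldots, k\} \setminus S$. The core estimate, which I will call $(\star)$, is
\[\mf{P}_p[X^*_{v_i} = 1 \,\forall i \in \overline S] \leq p^{|\overline S|}(1 - \theta(p))^{(r-1)|\overline S|}.\]
To prove $(\star)$, condition on the values of $X$ along the path. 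Since $X^*_{v_i} = 1$ forces $X_{v_i} = 1$ on $\overline S$, every $\overline S$-vertex lies in some maximal run of $X = 1$ on the path; the $X$-cluster through such a run is finite if and only if every off-path child of a run-vertex has a finite downward cluster in its own subtree. These finiteness events are independent across disjoint subtrees, and each has probability $1 - \theta(p)$ (a fresh copy of $T$ rooted at that child). A run of length $m$ contributes at least $m(r-1)$ off-path children, and the $\overline S$-touching runs jointly cover $\overline S$; averaging also over the $X$-values on $S$ then yields $(\star)$.

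Inserting $(\star)$ and collapsing the binomial sum gives $\mf{P}_{p,\delta}[Z_{v_i} = 1 \,\forall i] \leq (\delta + (1-\delta)\beta)^{k+1}$ with $\beta := p(1-\theta(p))^{r-1}$, so the expectation is bounded by $\sum_{k \geq 0} r^k(\delta + (1-\delta)\beta)^{k+1}$, which converges as soon as $r\delta + r(1-\delta)\beta < 1$. The decisive observation is that $r\beta < 1$ whenever $p > p_c = 1/r$: writing $f(s) := (1-p+ps)^r$ for the Binomial$(r,p)$ offspring generating function and $\eta := 1 - \theta(p)/p \in [0, 1)$ for its extinction probability, a direct computation gives $r\beta = f'(\eta)$, and the strict convexity of $f$ together with the fixed-point conditions $f(\eta) = \eta$ and $f(1) = 1$ forces $f'(\eta) < 1 < f'(1)$. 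Any $\delta \in \bigl(0,\,(1-r\beta)/(r(1-\beta))\bigr)$ then completes the proof.

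The main technical obstacle is $(\star)$: arbitrary $X$-values on $S$-indices can fuse two adjacent $\overline S$-runs into a single longer run, and one must verify that this fusion only \emph{increases} the number of off-path children whose downward clusters are required to be finite, keeping the bound intact. Apart from this bookkeeping, the argument is a clean branching-process tail estimate resting on the classical subcriticality of a supercritical Galton-Watson process conditioned on its extinction event, extending the ideas of van den Berg and Brouwer for $r=2$ to arbitrary $r$.
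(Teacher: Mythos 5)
Your argument is correct, and it takes a genuinely different route from the paper's proof. The paper argues by contradiction: assuming $\theta(p_0,\delta)>0$ for every $\delta$, it uses the van den Berg--Brouwer monotonicity lemma to extend this to all $p\in(p_c,p_0]$, derives via a recursion at the root and inclusion--exclusion the bound $\theta(p,\delta)\leq \mf{P}_{p,\delta}[A]-\mf{P}_{p,\delta}[B]$, divides by $\theta(p,\delta)$, lets $\delta\downarrow 0$ (using Lemma \ref{lem critical delta}) to obtain $1\leq pr\left(1-\theta(p)\right)^{r-1}$, and finally refutes this only in the limit $p\downarrow p_c$ by comparing with the exact recursion $\theta(p)=p\bigl(1-(1-\theta(p))^r\bigr)$. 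You instead give a direct first-moment estimate: the path decomposition and your inequality $(\star)$ are sound, since conditionally on the $X$-values along the path the finiteness of the clusters of the $\overline S$-vertices is equivalent to the finiteness of the downward clusters of the off-path children of the $\overline S$-touching runs, these children lie in pairwise disjoint subtrees independent of the path values, each is finite with probability $1-\theta(p)$, and their number is at least $(r-1)|\overline S|$ because the vertices of the $\overline S$-touching runs always contain $\overline S$ itself (so the fusion of runs can only increase the count); summing over $S$ gives $\bigl(\delta+(1-\delta)\beta\bigr)^{k+1}$ with $\beta=p(1-\theta(p))^{r-1}$, and your identification $r\beta=f'(\eta)<1$ for $p>p_c$ is the standard strict-convexity fact about the extinction fixed point of a supercritical Galton--Watson process. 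It is worth noting that the quantity $pr(1-\theta(p))^{r-1}$ is also the pivot of the paper's argument, but there it is only shown to be incompatible with the recursion for $\theta(p)$ asymptotically as $p\downarrow p_c$, which is what forces the detour through the monotonicity lemma and through Lemma \ref{lem critical delta}; your route proves $pr(1-\theta(p))^{r-1}<1$ pointwise for every $p>p_c$, dispenses with both of those ingredients, and moreover yields an explicit sufficient condition $\delta<(1-r\beta)/\bigl(r(1-\beta)\bigr)$, i.e.\ exactly the kind of quantitative criterion the paper remarks it does not obtain for $r\geq 3$. What the paper's softer argument buys in exchange is that it never needs the path-counting machinery and stays entirely at the level of the root recursion; as a proof of Proposition \ref{prop critical delta}, though, your version is complete, the only step deserving a careful write-up being the run/fusion bookkeeping behind $(\star)$.
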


Proposition \ref{prop critical delta} is a generalization of a result by J.\ van den Berg and R.\ Brouwer (\cite{berg2004destructive}, Theorem~5.1), who proved the following statement for the case where $T$ is the binary tree (i.e.\ $r = 2$): If $p \in (p_c, 1)$ and $\delta > 0$ satisfies
\begin{align} \label{eq delta binary tree}
p \left( 1 - \delta \right) \geq p_c \text{,}
\end{align}
then $\theta(p, \delta) = 0$. Our proof of Proposition \ref{prop critical delta} for general $r$ is based on the same principal ideas as \cite{berg2004destructive} but eventually takes a different route due to the occurrence of higher order terms for $r \geq 3$. Although these terms turn out to be asymptotically negligible, they are the reason why for $r \geq 3$ we do not obtain an explicit condition on $\delta$ like (\ref{eq delta binary tree}).

We first prove a weaker version of Proposition \ref{prop critical delta}:

\begin{lem} \label{lem critical delta}
For all $p \in (p_c, 1)$ we have $\lim_{\delta \downarrow 0} \theta(p, \delta) = 0$.
\end{lem}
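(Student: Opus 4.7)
The plan is to pass to the limit $\delta \downarrow 0$ by comparing $Z$ with $X^*$ at every finite level and exploiting the fact that $X^*$ has no infinite clusters at all by construction. Since $T$ is locally finite, an infinite $Z$-cluster containing $\emptyset$ must meet every sphere $T_n$, so
\[
\theta(p, \delta) \leq \mf{P}_{p, \delta}\!\left[\emptyset \leftrightarrow T_n \text{ in } Z\right] \qquad \text{for every } n \in \mb{N}.
\]
The idea is to first fix $n$, bound the right-hand side by an expression whose $\delta \downarrow 0$ limit is a $\delta$-free quantity, and only afterwards let $n \to \infty$.

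The first key step is the ``bridge'' estimate. If there is a $Z$-open path from $\emptyset$ to $T_n$ but no $X^*$-open such path, then any $Z$-open path must visit at least one vertex $w \in B_n$ with $X^*_w = 0$ and $Y_w = 1$; a union bound over $B_n$ yields
\[
\mf{P}_{p, \delta}\!\left[\emptyset \leftrightarrow T_n \text{ in } Z\right] \leq \mf{P}_{p, \delta}\!\left[\emptyset \leftrightarrow T_n \text{ in } X^*\right] + |B_n| \, \delta,
\]
where the first probability on the right does not depend on $\delta$, since $X^*$ is a function of $X$ alone. The second key step is the observation that the $X^*$-cluster of $\emptyset$ either equals the $X$-cluster of $\emptyset$ (when the latter is finite) or is empty (when it is infinite). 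Hence $\{\emptyset \leftrightarrow T_n \text{ in } X^*\}$ coincides with the event that the $X$-cluster of $\emptyset$ is finite and meets $T_n$. These events decrease as $n \to \infty$ to the empty event, because a.s.\ every finite cluster has a finite maximum generation; thus their probability tends to $0$.

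Combining the two steps, $\limsup_{\delta \downarrow 0} \theta(p, \delta) \leq \mf{P}_{p, \delta}[\emptyset \leftrightarrow T_n \text{ in } X^*]$ for every $n$, and letting $n \to \infty$ finishes the proof. The argument is essentially a soft double-limit manipulation, so no single step is really hard. The only part requiring any care is the bridge estimate: one must argue that any vertex $w$ responsible for the discrepancy between $Z$-connectivity and $X^*$-connectivity to $T_n$ must lie in the finite ball $B_n$, which is what makes the error $|B_n| \, \delta$ vanish in the $\delta \downarrow 0$ limit for any fixed $n$. All the heavier combinatorial work, which would link $\delta$ quantitatively to the supercritical survival of $X$ and give the stronger statement $\theta(p, \delta) = 0$ for some $\delta > 0$, is deferred to the proof of Proposition \ref{prop critical delta}.
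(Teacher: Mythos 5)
Your argument is correct, but it takes a different route from the paper. The paper's proof is a one-step decomposition at the root: on the event that the $X$-cluster of $\emptyset$ is infinite one needs $Y_{\emptyset} = 1$, contributing $\theta(p)\delta$, and on the complementary event the $Z$-cluster of $\emptyset$ is dominated by the $(X \vee Y)$-cluster, contributing $\theta(p + (1 - p)\delta) - \theta(p)$; the conclusion then follows from the continuity of $p \mapsto \theta(p)$. You instead avoid any appeal to continuity of $\theta$ by a finite-volume approximation: you bound $\theta(p, \delta)$ by $\mf{P}_{p, \delta}[\emptyset \leftrightarrow T_n \text{ in } Z]$, use the bridge/union bound to compare with $X^*$-connectivity at cost $|B_n|\,\delta$ (valid because in a tree the simple path from $\emptyset$ to $T_n$ stays in $B_n$, and $Z_w = 1$, $X^*_w = 0$ forces $Y_w = 1$), and then let $n \to \infty$ using that the $X^*$-cluster of $\emptyset$ is either the finite $X$-cluster or empty, so the events $\{\emptyset \leftrightarrow T_n \text{ in } X^*\}$ decrease to the impossible event that a finite cluster meets every sphere. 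Each step checks out; the only cosmetic slip is that in your final display the bound $\mf{P}_{p,\delta}[\emptyset \leftrightarrow T_n \text{ in } X^*]$ still carries a $\delta$ subscript, though you have already observed it is a function of $X$ alone and hence $\delta$-free, which is what legitimizes interchanging it with $\limsup_{\delta \downarrow 0}$. In terms of trade-offs, the paper's proof is shorter and produces the explicit estimate $\theta(p,\delta) \leq \theta(p)\delta + \theta(p + (1-p)\delta) - \theta(p)$, but needs continuity of the percolation function (easy on trees, where $\theta$ is available in closed form); your proof is softer input-wise, using only a union bound and continuity of measure along a decreasing sequence of events, at the price of a double limit in $\delta$ and $n$, and it would transfer verbatim to any locally finite graph once ``meets every sphere'' is interpreted via graph distance.
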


\begin{proof}[Proof of Lemma \ref{lem critical delta}]
Let $p \in (p_c, 1)$ and $\delta \in (0, 1)$. By distinguishing whether or not the root $\emptyset$ is in an infinite cluster after the first step of self-destructive percolation we obtain the inequality
\begin{align*}
\theta(p, \delta)
&\leq \mf{P}_{p, \delta} \left[ \text{the cluster of $\emptyset$ in $X$ is infinite}, Y_{\emptyset} = 1 \right] \\
&\hspace{4mm} + \mf{P}_{p, \delta} \left[ \text{the cluster of $\emptyset$ in $X$ is finite}, \text{the cluster of $\emptyset$ in $X \vee Y$ is infinite} \right] \\
&= \theta(p) \delta + \left( \theta(p + (1 - p)\delta) - \theta(p) \right) \text{.}
\end{align*}
Since $\theta(\,\cdot\,)$ is continuous, the last expression tends to zero for $\delta \downarrow 0$, which proves the lemma.
\end{proof}

\begin{proof}[Proof of Proposition \ref{prop critical delta}]
Suppose that Proposition \ref{prop critical delta} is not true. Then there exists $p_0 \in (p_c, 1)$ such that for all $\delta \in (0, 1)$ we have $\theta(p_0, \delta) > 0$. In fact, even the stronger statement
\begin{align} \label{prop critical delta eq p_0}
\forall p \in (p_c, p_0] \, \forall \delta \in (0, 1): \theta(p, \delta) > 0
\end{align}
is true. This is due to the fact that if $p_1, p_2 \in (p_c, 1)$ and $\delta_1, \delta_2 \in (0, 1)$ satisfy $p _1 \geq p_2$ and $p_1 + (1 - p_1) \delta_1 = p_2 + (1 - p_2) \delta_2$, then $\theta(p_1, \delta_1) \leq \theta(p_2, \delta_2)$ holds (see \cite{berg2004destructive}, Lemma~2.3). We will show that (\ref{prop critical delta eq p_0}) leads to a contradiction.

\begin{figure}
\centering
\psset{unit=0.4cm}
\begin{pspicture*}(-15.2,-1.5)(15.2,17.2)
\def\lw{0.06}
\def\lwh{0.03}
\def\ds{1.5}
\psline[linewidth=\lw,linestyle=solid,linecolor=black](0,0)(-9,6)
\psline[linewidth=\lw,linestyle=solid,linecolor=black](0,0)(0,6)
\psline[linewidth=\lw,linestyle=solid,linecolor=black](0,0)(9,6)
\multido{\ixb=-9+9,\ixt=-12+9}{3}{
\psline[linewidth=\lw,linestyle=solid,linecolor=black](\ixb,6)(\ixt,12)
}
\multido{\ix=-9+9}{3}{
\psline[linewidth=\lw,linestyle=solid,linecolor=black](\ix,6)(\ix,12)
}
\multido{\ixb=-9+9,\ixt=-6+9}{3}{
\psline[linewidth=\lw,linestyle=solid,linecolor=black](\ixb,6)(\ixt,12)
}
\multido{\ixb=-12+3,\rxt=-12.5+3.0}{9}{
\psline[linewidth=\lw,linestyle=solid,linecolor=black](\ixb,12)(\rxt,15)
}
\multido{\ix=-12+3}{9}{
\psline[linewidth=\lw,linestyle=solid,linecolor=black](\ix,12)(\ix,15)
}
\multido{\ixb=-12+3,\rxt=-11.5+3.0}{9}{
\psline[linewidth=\lw,linestyle=solid,linecolor=black](\ixb,12)(\rxt,15)
}
\psdots[dotscale=\ds,dotstyle=*,fillstyle=solid,linecolor=black](0,0)
\multido{\ix=-9+9}{3}{
\psdots[dotscale=\ds,dotstyle=*,fillstyle=solid,linecolor=black](\ix,6)
}
\multido{\ix=-12+3}{9}{
\psdots[dotscale=\ds,dotstyle=*,fillstyle=solid,linecolor=black](\ix,12)
}
\psbezier[linewidth=\lwh,linestyle=dashed,linecolor=black](-4,17)(-2.8,0)(2.8,0)(4,17)
\psbezier[linewidth=\lwh,linestyle=dashed,linecolor=black](-13,17)(-11.8,0)(-6.2,0)(-5,17)
\psbezier[linewidth=\lwh,linestyle=dashed,linecolor=black](13,17)(11.8,0)(6.2,0)(5,17)
\psbezier[linewidth=\lwh,linestyle=dashed,linecolor=black](-15,17)(-12,-7.5)(12,-7.5)(15,17)
\multido{\rx=-12.1+3.0}{9}{
\rput[lt](\rx,17){$\vdots$}
}
\rput[lt](0.6,0.1){$\emptyset$}
\rput[lt](-8.4,6.4){$1$}
\rput[lt](0.6,6.4){$2$}
\rput[lt](9.6,6.4){$3$}
\rput[lb](13.8,16){$T$}
\rput[lb](-8,16){$T^{(1)}$}
\rput[lb](1,16){$T^{(2)}$}
\rput[lb](10,16){$T^{(3)}$}
\end{pspicture*}
\caption{Illustration of the case $r = 3$ - the $3$-regular rooted tree $T$ with the root $\emptyset$, its children $1, 2, 3$ and the $3$-regular rooted subtrees $T^{(1)}, T^{(2)}, T^{(3)}$}
\label{fig tree and subtrees}
\end{figure}
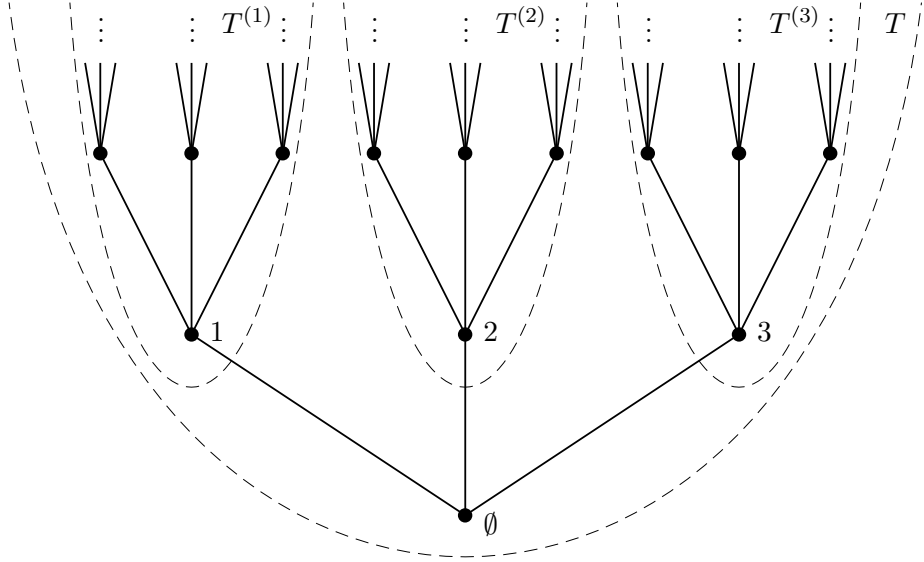

Let $p \in (p_c, p_0]$, $\delta \in (0, 1)$ and define the probability measure $\mf{P}_{p, \delta}$ and the random configurations $X$, $Y$, $X^*$, $Z$ as above (at the beginning of Section \ref{sec proof critical epsilon}). We will derive an inequality for $\theta(p, \delta)$ by exploiting the recursive structure of the tree $T$.
So let us denote the $r$ children of the root $\emptyset$ by $1, \ldots, r$. For $i = 1, \ldots, r$, let $T^{(i)}$ be the $r$-regular rooted subtree of $T$ which has $i$ as its root (see Figure \ref{fig tree and subtrees} for an illustration of the case $r = 3$). As before, we will use the term $T^{(i)}$ both for the graph and its vertex set. Let $X^{(i)} := (X_v)_{v \in T^{(i)}}$ and $Y^{(i)} := (Y_v)_{v \in T^{(i)}}$ be the configurations we obtain when we restrict $X$ and $Y$ to the subtree $T^{(i)}$. Moreover, let $X^{*(i)} = (X^{*(i)}_v)_{v \in T^{(i)}}$ and $Z^{(i)} = (Z^{(i)}_v)_{v \in T^{(i)}}$ be the corresponding configurations for self-destructive percolation on $T^{(i)}$, i.e.
\begin{align*}
X_v^{*(i)} :=
\begin{cases}
1 & \text{if $X_v = 1$ and the cluster of $v$ in $X^{(i)}$ is finite,} \\
0 & \text{otherwise,}
\end{cases}
\end{align*}
and
\begin{align*}
Z_v^{(i)} := X_v^{*(i)} \vee Y_v \text{.}
\end{align*}
Then the quadruples of configurations $(X^{(i)}, Y^{(i)}, X^{*(i)}, Z^{(i)})$, $i = 1, \ldots, r$, are independent and have the same distribution as $(X, Y, X^{*}, Z)$.

Now consider the events
\begin{align*}
A := \left\{ X_{\emptyset} \vee Y_{\emptyset} = 1, \exists i \in \{1, \ldots, r\}: \text{the cluster of $i$ in $Z^{(i)}$ is infinite} \right\}
\end{align*}
and
\begin{align*}
B := \Bigl\{ X_{\emptyset} = 1, Y_{\emptyset} = 0, \exists i, j \in \{1, \ldots, r\}: i \not= j, \text{the cluster of $i$ in $Z^{(i)}$ is infinite}, \\
\text{the cluster of $j$ in $X^{(j)}$ is infinite} \Bigr\} \text{.}
\end{align*}
Since these events satisfy the inclusions
\begin{align*}
\left\{ \text{the cluster of $\emptyset$ in $Z$ is infinite} \right\} &\subset A \text{,} \\
\left\{ \text{the cluster of $\emptyset$ in $Z$ is finite} \right\} &\supset B \text{,} \\
B &\subset A \text{,}
\end{align*}
we have
\begin{align} \label{prop critical delta eq inequality A B}
\theta(p, \delta) \leq \mf{P}_{p, \delta} \left[ A \right] - \mf{P}_{p, \delta} \left[ B \right] \text{.}
\end{align}
From the definition of $A$ we readily deduce
\begin{align} \label{prop critical delta eq equality A}
\begin{alignedat}{2}
\mf{P}_{p, \delta} \left[ A \right]
&= \makebox[0pt][l]{$\displaystyle  \left( p + (1 - p)\delta \right) \left( 1 - \left( 1 - \theta(p, \delta) \right)^r \right)$} \\
&= \left( p + (1 - p)\delta \right) \cdot r \theta(p, \delta) \,+\, && \mc{O}(\theta(p, \delta)^2) \\
& && \text{for $\delta \downarrow 0$, uniformly for $p \in (p_c, p_0]$.}
\end{alignedat}
\end{align}
In order to calculate $\mf{P}_{p, \delta} \left[ B \right]$, we define 
\begin{align*}
D_i := \left\{ \text{the cluster of $i$ in $Z^{(i)}$ is infinite}, \exists j \in \{1, \ldots, r\} \setminus \{i\}: \text{the cluster of $j$ in $X^{(j)}$ is infinite} \right\}
\end{align*}
for $i = 1, \ldots, r$ and rewrite $B$ as
\begin{align} \label{prop critical delta eq B Di}
B = \left\{ X_{\emptyset} = 1, Y_{\emptyset} = 0 \right\} \cap \bigcup_{i = 1}^r D_i \text{.}
\end{align}
For $i \in \{1, \ldots, r\}$ the definition of $D_i$ implies
\begin{align*}
\mf{P}_{p, \delta} \left[ D_i\right]
= \theta(p, \delta) \left( 1 - \left( 1 - \theta(p) \right)^{r - 1} \right) \text{,}
\end{align*}
and for $k \in \{2, \ldots, r\}$ and $1 \leq i_1 < \ldots < i_k \leq r$ we have the upper bound
\begin{align*}
\mf{P}_{p, \delta} \left[ D_{i_1} \cap \ldots \cap D_{i_k} \right]
&\leq \theta(p, \delta)^k \text{.}
\end{align*}
Hence equation (\ref{prop critical delta eq B Di}) yields
\begin{align} \label{prop critical delta eq equality B}
\begin{alignedat}{2}
\mf{P}_{p, \delta} \left[ B \right]
&= \makebox[0pt][l]{$\displaystyle p(1 - \delta) \Biggl( \sum_{i = 1}^r \mf{P}_{p, \delta} \left[ D_i \right] + \sum_{k = 2}^r \sum_{1 \leq i_1 < \ldots < i_k \leq r} (-1)^{k + 1} \, \mf{P}_{p, \delta} \left[ D_{i_1} \cap \ldots \cap D_{i_k} \right] \Biggr)$} \\
&= p(1 - \delta) \cdot r \theta(p, \delta) \left( 1 - \left( 1 - \theta(p) \right)^{r - 1} \right) \,+\, &&\mc{O}(\theta(p, \delta)^2) \\
& && \text{for $\delta \downarrow 0$, uniformly for $p \in (p_c, p_0]$.}
\end{alignedat}
\end{align}
Inserting (\ref{prop critical delta eq equality A}) and (\ref{prop critical delta eq equality B}) into the inequality (\ref{prop critical delta eq inequality A B}) and dividing both sides by $\theta(p, \delta)$ (which is possible because of our assumption (\ref{prop critical delta eq p_0})), we obtain
\begin{align*}
1
\leq \left( p + (1 - p)\delta \right) \cdot r - p(1 - \delta) \cdot r \left( 1 - \left( 1 - \theta(p) \right)^{r - 1} \right) \,+\, &\mc{O}(\theta(p, \delta)) \\
&\text{for $\delta \downarrow 0$, uniformly for $p \in (p_c, p_0]$.}
\end{align*}
Finally, letting $\delta$ tend to zero and using Lemma \ref{lem critical delta}, we get
\begin{align*}
1 \leq pr \left( 1 - \theta(p) \right)^{r - 1} \text{.}
\end{align*}
In the remainder of the proof we show that this inequality leads to a contradiction when $p$ tends to $p_c$. Expanding the right side of the inequality in powers of $\theta(p)$, we obtain
\begin{align} \label{prop critical delta eq thetap 1}
1 \leq pr \left( 1 - (r - 1)\theta(p) \right) + \mc{O}(\theta(p)^2) \qquad \text{for } p \downarrow p_c \text{.}
\end{align}
On the other hand, the recursive structure of the tree $T$ implies
\begin{align*}
\theta(p)
&= p \left( 1 - \left( 1 - \theta(p) \right)^r \right) \\
&= p \left( r\theta(p) - \frac{1}{2}r(r - 1)\theta(p)^2 \right) + \mc{O} \left( \theta(p)^3 \right) \qquad \text{for } p \downarrow p_c \text{.}
\end{align*}
Dividing both sides by $\theta(p)$ (which is positive for $p \in (p_c, p_0]$) gives
\begin{align} \label{prop critical delta eq thetap 2}
1 = pr \left( 1 - \frac{1}{2}(r - 1)\theta(p) \right) + \mc{O} \left( \theta(p)^2 \right) \qquad \text{for } p \downarrow p_c \text{.}
\end{align}
Subtracting (\ref{prop critical delta eq thetap 2}) from (\ref{prop critical delta eq thetap 1}) and dividing by $\theta(p)$ again then leads to the inequality
\begin{align*}
0 \leq - \frac{1}{2}pr(r - 1) + \mc{O} \left( \theta(p) \right) \qquad \text{for } p \downarrow p_c \text{.}
\end{align*}
But since $\theta(p) \rightarrow 0$ for $p \downarrow p_c$, this produces a contradiction.
\end{proof}

\paragraph{Acknowledgement.}
I am grateful to Franz Merkl for helpful discussions and remarks. This work was supported by a scholarship from the Studienstiftung des deutschen Volkes.

\bibliography{forest_fire_tree}
\bibliographystyle{alpha}

\end{document}